\documentclass{amsart}


\usepackage{enumerate}

\usepackage{amssymb}
\usepackage{graphicx}
\usepackage{MnSymbol}

\usepackage{amsthm}

\title{The fundamental group of reduced suspensions}
\author{Samuel M. Corson}
\author{Wolfram Hojka}

\theoremstyle{definition}
\theoremstyle{definition}\newtheorem*{B}{Theorem \ref{thmOmega}}

\theoremstyle{definition}\newtheorem{bigtheorem}{Theorem}


\theoremstyle{definition}\newtheorem{theorem}{Theorem}[section]

\theoremstyle{definition}
\theoremstyle{definition}\newtheorem{proposition}[theorem]{Proposition}
\theoremstyle{definition}\newtheorem{definition}[theorem]{Definition}
\theoremstyle{definition}
\theoremstyle{definition}
\theoremstyle{definition}\newtheorem{remark}[theorem]{Remark}
\theoremstyle{definition}
\theoremstyle{definition}\newtheorem{lemma}[theorem]{Lemma}
\theoremstyle{definition}
\theoremstyle{definition}
\theoremstyle{definition}
\theoremstyle{definition}
\theoremstyle{definition}\newtheorem{construction}[theorem]{Construction}
\newtheorem*{question*}{Question}
\newtheorem*{theorem*}{Theorem}
\newtheorem*{corollary*}{Corollary}
\newtheorem*{lemma*}{Lemma}

\def\pmc#1{\setbox0=\hbox{#1}
    \kern-.1em\copy0\kern-\wd0
    \kern.1em\copy0\kern-\wd0}

\newcommand{\card}{\operatorname{card}}

\newcommand{\K}{\mathcal{K}}

\DeclareMathOperator{\im}{im}

\DeclareMathOperator{\topprod}{\circledast}

\begin{document}

\address{Matematika Saila\\ UPV/EHU, Sarriena S/N\\
48940, Leioa - Bizkaia, Spain}
\email{sammyc973@gmail.com}

\address{Institute for Analysis and Scientific Computation\\
Technische Universit\"at Wien, Vienna, Austria
}

\email{w.hojka@gmail.com}
\keywords{suspension, reduced suspension, harmonic archipelago}
\subjclass[2010]{Primary 14F35; Secondary 55P40, 57M30}
\thanks{The work of the first author was supported by the Additional Funding Programme for Mathematical
Sciences, delivered by EPSRC (EP/V521917/1) and the Heilbronn Institute
for Mathematical Research.  Also by a Maria Zambrano grant MAZAM22/08 at UPV/EHU funded by the Ministry of Universities and financed by European Union - Next Generation EU.  Also by the Basque Government Grant IT1483-22 and Spanish Government Grants PID2019-107444GA-I00 and PID2020-117281GB-I00}

\maketitle

\begin{abstract}  We classify pointed spaces according to the first fundamental group of their reduced suspension.  A pointed space is either of so-called totally path disconnected type or of horseshoe type.  These two camps are defined topologically but a characterization is given in terms of fundamental groups.  Among totally path disconnected spaces the fundamental group is shown to be a complete invariant for a notion of topological equivalence weaker than that of homeomorphism.

\end{abstract}

\begin{section}{Introduction}

In general, the calculation of homotopy groups of suspensions and reduced suspensions is an amazingly difficult endeavour, and little is known, even for simple spaces like spheres. A great deal of research has been devoted to this, beginning with Freudenthal's celebrated stabilization theorem \cite{F}. For the fundamental group only, on the other hand, the situation is more clearly laid out as long as one only considers CW-complexes, or more generally well-pointed spaces: $\pi_1 \Sigma X$ is just a free group with the number of connected components of $X$ minus one as its rank.

This, however, belies the rich structure and great variety of groups that can be attained if arbitrary spaces are considered. For example, the \emph{Hawaiian earring} with its rather complicated fundamental group is the reduced suspension of $\omega +1$, i.e.~a countable set with one limit point. An observer might be inclined to assume that $\pi_1\Sigma X$ is at least completely determined by the path components $\pi_0 X$, as is true for the ordinary suspension, perhaps additionally equipped with an appropriate topology -- but even that cannot capture the entire variability. As a second example, the reduced suspension of the topologist's sine curve (which has two path components) is known as the \emph{harmonic archipelago} and its fundamental group is uncountable and includes the rationals $\mathbb Q$ as a subgroup (this space is homotopy equivalent to that mentioned in \cite{BS} of the same name).

In this article we offer a criterion that relates algebraic properties of $\pi_1\Sigma X$, such as \emph{not} containing divisible elements, or being isomorphic to some $\pi_1\Sigma Y$ with $Y$ a totally path disconnected space, and a topological condition on $X$ to each other (Theorem \ref{thmOmega}). For the case of totally path disconnected spaces we provide a topological characterization that precisely determines the isomorphism classes of these groups in Theorem \ref{tpd-sbc}.

\begin{bigtheorem}\label{tpd-sbc}
Let $(X,x)$, $(Y,y)$ be totally path disconnected Hausdorff spaces which are first countable at their distinguished points. Then $\pi_1(\Sigma(X,x)) \simeq \pi_1(\Sigma(Y,y))$ if and only if there is a bijection $f: (X, x) \rightarrow (Y, y)$ with $f$ continuous at $x$ and $f^{-1}$ continuous at $y$.
\end{bigtheorem}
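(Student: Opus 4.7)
The strategy is to identify $\pi_1(\Sigma(X,x))$, for totally path disconnected Hausdorff $X$ first-countable at $x$, with a group of convergent reduced words whose generators are indexed by $X \setminus \{x\}$ and whose admissible infinite words are those whose indexing sequences eventually enter every neighborhood of $x$. Total path disconnectedness forces any loop in $\Sigma X$ to decompose (up to reparametrization) into a concatenation of elementary loops $\ell_p$ that ascend and descend the single column $\{p\} \times [0,1]$, while first countability at $x$ ensures that such concatenations can be indexed by genuine sequences, with the topology at the basepoint of $\Sigma X$ forcing exactly the admissibility condition described.

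For the ``if'' direction, given a bijection $f$ with $f$ continuous at $x$ and $f^{-1}$ continuous at $y$, I would use the induced relabeling $\ell_p \mapsto \ell_{f(p)}$. This preserves admissibility in both directions, because continuity at the distinguished points is precisely the statement that $f$ and $f^{-1}$ carry neighborhood bases to neighborhood bases, so an indexing sequence eventually enters every neighborhood of $x$ iff its image eventually enters every neighborhood of $y$. It then remains to verify that this relabeling descends through the word-reduction relation defining the group, yielding an isomorphism of fundamental groups.

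For the ``only if'' direction, I would attempt to reconstruct the topological data of $(X,x)$ from the abstract isomorphism type. First, characterize the set $\{\ell_p : p \in X \setminus \{x\}\}$ algebraically, up to inversion, as a canonical ``basis'' of the group, using an invariant property such as generating maximal infinite cyclic subgroups that do not already appear as non-trivial convergent products of others. Second, characterize the neighborhood filter at $x$ by declaring a subset $S$ of generators to be ``small'' -- its complement a neighborhood of $x$ -- precisely when every sequence enumeration of $S$ yields an admissible infinite word in the group. An abstract isomorphism then carries canonical bases to canonical bases and small sets to small sets, producing the required bijection $f$ together with its continuity at $x$ and $f^{-1}$'s continuity at $y$.

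The main obstacle I anticipate is the algebraic intrinsic characterization of the canonical generators in the first stage: isolating them in the group without referring to the topology is the heart of the usual rigidity theorems for Hawaiian-earring-type groups, and extending such results to arbitrary first-countable totally path disconnected $X$ requires care, especially when the cardinality of $X$ is large. First countability at the basepoint is what should make the argument tractable, since a countable neighborhood basis translates into countable convergent sequences of generators, and the resulting infinite products in the group furnish group-theoretic handles on both the generating set and the filter; outside the first-countable setting one would expect such a reconstruction to fail in interesting ways.
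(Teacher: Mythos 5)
Your ``if'' direction is essentially workable and close in spirit to the paper's: once $\pi_1(\Sigma(X,x))$ is identified with a group of convergent words (the paper proves $\pi_1(\Sigma(X,x))\simeq \topprod_n F(\lambda_n)$ with $\lambda_n=\card(U_n\setminus U_{n+1})$ for a neighborhood basis $U_n$ of $x$), relabeling generators by $f$ preserves admissibility exactly because of the two-sided continuity at the basepoints. The paper instead pushes reduced loops forward through the (discontinuous) induced map on suspensions and checks continuity on Peano continua, but your combinatorial version of this half is a legitimate alternative, modulo actually establishing the word description.

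The ``only if'' direction, however, contains a genuine gap, and it is precisely the obstacle you flagged: there is no intrinsic algebraic characterization of the set $\{\ell_p\}_{p\in X\setminus\{x\}}$ inside the group, even up to conjugacy and inversion. Already for the Hawaiian earring group $\topprod_n\mathbb{Z}$ with standard generators $a_n$, the assignment $a_0\mapsto a_0a_1$, $a_n\mapsto a_n$ for $n\geq 1$ extends to an automorphism that does not preserve the generator set, so any proposed invariant property (maximal cyclic subgroups not expressible as convergent products, etc.) cannot single out the $a_n$; the same failure propagates to your ``small set'' reconstruction of the neighborhood filter. The theorem is true only because far less needs to be recovered: an abstract isomorphism need not induce any particular bijection on generators, and the required $f$ is \emph{not} read off from the isomorphism. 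The paper's forward direction extracts only the sequence of cardinals $(\lambda_n)_n$ up to a coarse equivalence $\approx$ (eventual vanishing, eventual boundedness below each infinite cardinal, and mutual domination of partial sums), using Higman's theorem that homomorphisms $\topprod_n\mathbb{Z}\rightarrow F$ to free groups factor through a finite retraction, together with Eda's theorem locating the image of a tail $\topprod_{n>N}G_n$ inside a conjugate of a single free factor. Any bijection realizing $(\lambda_n)_n\approx(\mu_n)_n$ then serves as $f$. To repair your argument you would need to abandon the reconstruction of individual generators and instead aim at these cardinality invariants, which is where the real content of the proof lies.
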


\begin{definition}
We will call a pointed space $(X,x)$ a \textbf{horseshoe space} if there exists a neighbourhood $U$ of $x$, and sequences $(x_n)_n$, $(y_n)_n$ in $U$ converging to $x$, so that  $x_n$ and $y_n$ are in the same path-component of $X$ but are in different path components in $U$.
\end{definition}

\begin{bigtheorem}\label{thmOmega}
For a pointed Hausdorff space $(X,x)$ which is first countable at the distinguished point the following properties are equivalent:
\begin{enumerate}
\item
$(X, x)$ is a horseshoe space
\item
The fundamental group $\pi_1(HA)$ of the harmonic archipelago embeds into $\pi_1(\Sigma(X,x))$
\item
The group of rationals $\mathbb Q$ embeds into $\pi_1(\Sigma(X,x))$
\item
$\pi_1(\Sigma(X,x))$ contains an infinitely divisible element

\item
$X$ is not of tpd-type, i.e.~there is no totally path disconnected space $Y$ such that $\pi_1(\Sigma(X,x)) \simeq \pi_1(\Sigma(Y,y))$
\end{enumerate}
\end{bigtheorem}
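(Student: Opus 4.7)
The plan is to close the cyclic chain $(1) \Rightarrow (2) \Rightarrow (3) \Rightarrow (4) \Rightarrow (5) \Rightarrow (1)$. Two of the links are essentially formal: $(2) \Rightarrow (3)$ follows from the embedding $\mathbb{Q} \hookrightarrow \pi_1(HA)$ noted in the introduction, and $(3) \Rightarrow (4)$ is the observation that any nontrivial rational is infinitely divisible in $\mathbb{Q}$.

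For $(1) \Rightarrow (2)$, I would use the horseshoe data -- a neighbourhood $U$ of $x$ together with sequences $(x_n)_n, (y_n)_n$ converging to $x$ -- to construct a continuous $\varphi : HA \to \Sigma(X,x)$. The loops $t \mapsto [t,x_n]$ and $t \mapsto [t,y_n]$ play the roles of the Hawaiian circles, and a path $\gamma_n$ in $X$ from $x_n$ to $y_n$, which by assumption has to leave $U$, suspends to an archipelago disk homotoping one loop to the other in $\Sigma(X,x)$. Injectivity of $\varphi_*$ on $\pi_1$ is argued by shrinking $U$ along a first-countable basis at $x$, retracting outward, and comparing reduced word lengths; the horseshoe condition ensures enough separation survives in $U$ to distinguish nontrivial words.

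For $(5) \Rightarrow (1)$ I would proceed by contrapositive. Assuming $(X,x)$ is not horseshoe and fixing a decreasing countable neighbourhood basis at $x$, build a totally path disconnected pointed space $(Y,y)$ whose points are $y$ together with a choice of representatives for the path components encountered along the basis, and whose topology at $y$ reflects that basis. The failure of the horseshoe condition is precisely what one needs in order to push arbitrary loops in $\Sigma(X,x)$ (via small in-component paths) onto loops factoring through $\Sigma(Y,y)$, yielding $\pi_1(\Sigma(X,x)) \simeq \pi_1(\Sigma(Y,y))$ and hence tpd-type.

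The step I expect to absorb the most technical work is $(4) \Rightarrow (5)$: showing that for any first-countable tpd Hausdorff $Y$ the group $\pi_1(\Sigma(Y,y))$ has no infinitely divisible non-identity element. The plan is to describe elements of $\pi_1(\Sigma(Y,y))$ by reduced sequences of path components of $Y$ in the style of Hawaiian earring words, extract a length function (or embed into an inverse limit of free groups) that rules out nontrivial divisibility, and thereby conclude that any putative infinitely divisible element must be trivial. Establishing such a normal form -- while respecting the topology at the basepoint throughout -- is the main obstacle in the proof.
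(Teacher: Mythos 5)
Your cycle of implications and the supporting constructions coincide with the paper's: $(2)\Rightarrow(3)$ and $(3)\Rightarrow(4)$ are handled identically; your $(4)\Rightarrow(5)$ is exactly Proposition \ref{fourtofive}, and contrary to your expectation it is the cheapest step, because the normal form you worry about is Lemma \ref{characterizationastopprod} ($\pi_1(\Sigma(Y,y))\simeq \topprod_n F(\mu_n)$), already needed for Theorem \ref{tpd-sbc}; a nontrivial element survives some retraction $p_N$ onto a free group, where infinite divisibility is impossible. Your representative space $Y$ in $(5)\Rightarrow(1)$ is the paper's tight section, and your loops over $x_n$, $y_n$ in $(1)\Rightarrow(2)$ are the paper's realization data. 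The genuine gaps are the two injectivity claims, which is where essentially all of the paper's work lives.

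In $(5)\Rightarrow(1)$ you pass from ``every loop can be pushed into $\Sigma(Y,y)$'' directly to $\pi_1(\Sigma(X,x))\simeq \pi_1(\Sigma(Y,y))$. That argument gives surjectivity of the inclusion-induced map only; you must also show that a loop in $\Sigma(Y,x)$ which bounds in $\Sigma(X,x)$ already bounds in $\Sigma(Y,x)$, and this is not formal. The paper proves it as Lemma \ref{sectioninclusion} by covering the image of a putative nullhomotopy (a Peano continuum) with path-connected open sets adapted to the neighborhood basis, assembling them into a locally finite graph (Construction \ref{graph}), and using its components to collapse the nullhomotopy onto a finite stage. In $(1)\Rightarrow(2)$, your injectivity mechanism (``retracting outward and comparing reduced word lengths'') would fail as stated: there is no continuous retraction of $\Sigma(X,x)$ onto the union of the circles over the $x_n, y_n$, and word length does not detect the relations that the ambient space imposes (indeed each $\gamma_{x_n}\ast\gamma_{y_n}^{-1}$ \emph{is} trivial in $\pi_1(\Sigma(X,x))$, so one must show the kernel of the realization is exactly $\langle\langle\{a_n\}_n\rangle\rangle$ and no larger). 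The missing idea is Lemma \ref{gap}: a compactness and local path-connectedness argument on the image $H(I^2)$ of a nullhomotopy showing that for all but finitely many $n$, any path in $H(I^2)$ from $(x_n,0)$ or $(y_n,0)$ to $(X\setminus U)\times[-\frac12,\frac12]$ must exit through the top or bottom of the suspension; this is where the horseshoe separation (no two of the $x_n$, $y_n$ joined by a path in $U$) is actually cashed in, and it permits surgering the nullhomotopy into $\Sigma(U,x)$, after which Lemma \ref{sectioninclusion} again finishes the identification of the kernel. Without these two arguments the chain does not close.
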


In Section \ref{Prelim} we give some necessary preliminaries and establish notation.  Theorem \ref{tpd-sbc} is proved in Section \ref{tpd} and Theorem \ref{thmOmega} is proved in Section \ref{Omega}.

We point out that this interesting behavior of reduced suspensions is not limited to the first homotopy group.  We illustrate this using a modification of the Warsaw circle.  Let $X_0 = \{(x, \sin(\pi/x)): x\in (0, 1]\}\subset \mathbb{R}^2$ and $X_1 = \{0\} \times [-1, 1]$, so that $X_0 \cup X_1$ is the topologist's sine curve.  Let $X_2$ be an arc beginning at $(0, -1)$ and ending at $(1, 0)$ such that $X_2 \cap (X_0\cup X_1) = \{(0, -1), (1, 0)\}$ and $X_2$ never enters the upper half plane in $\mathbb{R}^2$, so that the Warsaw circle $WS$ is given by $WS = X_0\cup X_1\cup X_2$.  

Define the \emph{Polish sausage} to be the quotient space $PS = W \times S^1/ \{(0, y, t)\sim (0, y, t')\}_{(0, y)\in X_1; t, t'\in S^1}$  of $W \times S^1$ which identifies each circle above the limit arc $X_1$ to a point without identifying points on circles above different points of the limit arc.  Thus $PS$ is essentially a two dimensional tube which approaches and thins towards a limiting arc from the side and also tapers to the limiting arc from below \'a la the Warsaw circle (see Figure \ref{polska}).  This space is path connected and all points except those on the limit arc have a neighborhood which is homeomorphic to $\mathbb{R}^2$.

\begin{figure}\label{polska}
\centering
\includegraphics[width=5cm]{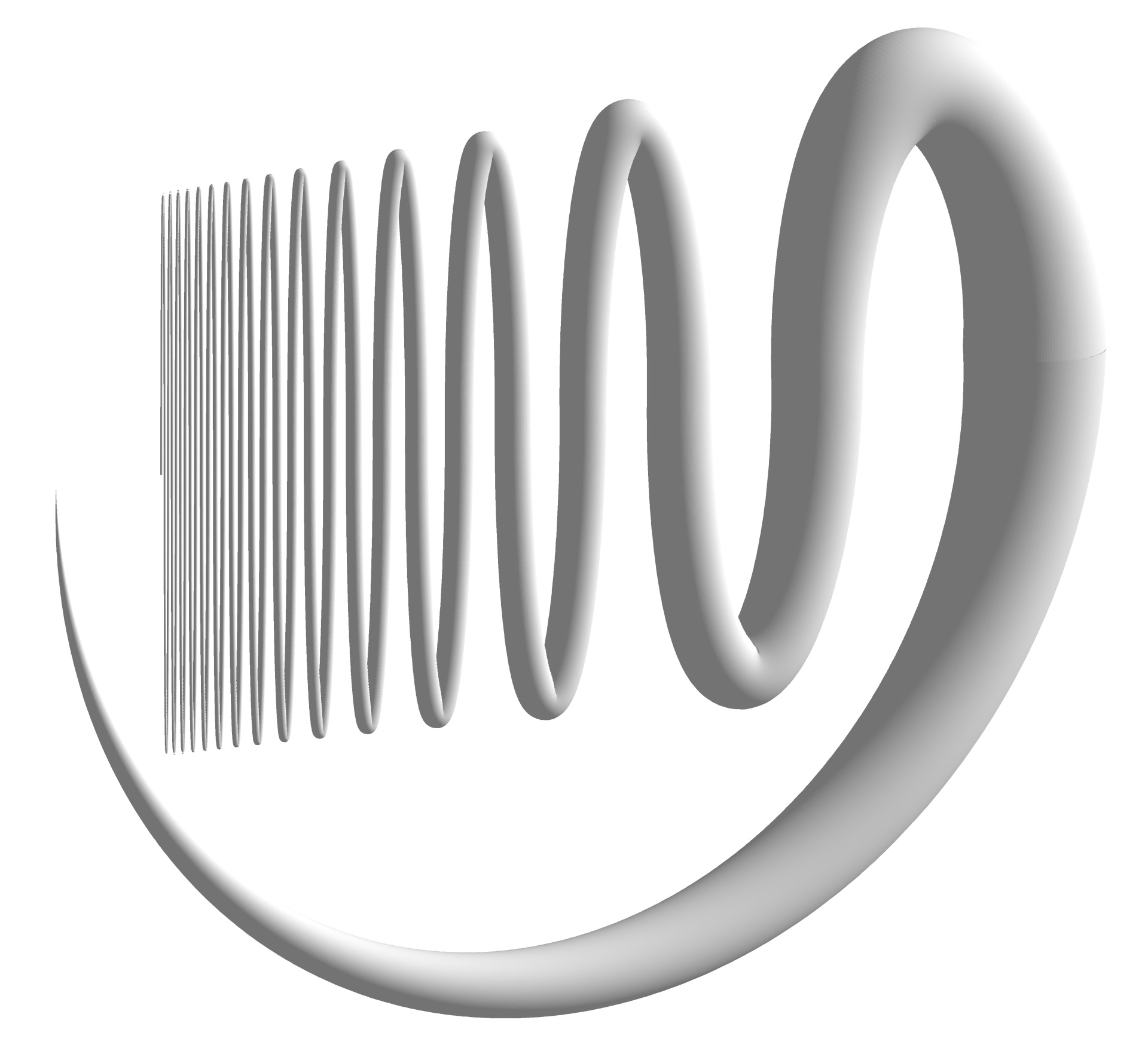}
\caption{Polish sausage}

\end{figure}

We claim, without formal proof, that the Polish sausage has trivial homotopy groups for every dimension $n\geq 1$.  One can roughly see this by noticing that for any compact path connected, locally path connected subset $Y\subset PS$ there exists a homotopy $H: PS \times [0, 1] \rightarrow PS$ such that $H(z, 0) = z$ for all $z\in PS$ and $H(Y, 1) \subseteq X_1$.

However, the second homotopy group of the reduced suspension $\pi_2\Sigma PS$ will be nontrivial.   We sketch why this is the case.  Let $z$ denote the point $[(0, 0)\times S^1]$ in $PS$. The set of disjoint circles $\{(\frac{1}{n+1}, 0)\times S^1\}_{n\in \omega}$ limits to $z$.   Taking one of these circles $C_n = (\frac{1}{n+1}, 0)\times S^1$ together with the point $z$ and computing the reduced suspension $\Sigma(C_n \cup \{z\}, z)$ we get the quotient space of a $2$-dimensional sphere obtained by identifying the north and south poles.  This reduced suspension includes into the reduced suspension $\iota_n:\Sigma(C_n \cup \{z\}, z) \rightarrow \Sigma(PS, z)$ of the entire space.  For each $n\in \omega$ let $\rho_n: [0, 1]^2\rightarrow \Sigma(C_n\cup \{z\},z)$ be a representative of any nontrivial element of $\pi_2(\Sigma(C_n\cup \{z\}, z))$.  Each $f_n = \iota_n\circ \rho_n$ is nulhomotopic in the space $\Sigma(PS, z)$, but the infinite concatenation  $f_0\ast f_1\ast f_2\ast \cdots :[0, 1]^2 \rightarrow \Sigma(PS, z)$ will not be.

\end{section}

\begin{section}{Notation and Preliminaries}\label{Prelim}

Let us first establish some notation. Set $I= [0,1]$, $D_1 = [-1, 1]$, $\partial D_1= \{-1, 1\}$, and $D_1^{\circ} = (-1, 1)$.  The reduced suspension $\Sigma(X,x)$ of a pointed space is the quotient space of $X \times D_1$ where $(X \times \partial D_1) \cup (\{x\} \times D_1)$ is collapsed to a single point. By abusing language, we may identify this point with $x$. Further $\mathring{X} := (X\setminus \{x\}) \times D_1^\circ$ canonically embeds into $\Sigma(X,x)$, as does $\check X := X \times \{0\}$, a homeomorphic copy of $X$.  We shall occasionally refer to subsets of $\Sigma(X, x)$ of the form $X\times ([-1, a) \cup (a, 1])$ and consider that $x$ belongs to this set.  Let $\chi: \Sigma(X,x) \rightarrow X$ be the noncontinuous map that takes $x$ to itself and a point in $\mathring{X}$ to its projection in $X$.  Let $\check{\iota}: X \rightarrow \check{X}$ be the map $\check{\iota}(z) = (z, 0) \in \check{X}$.  Thus $\check{\iota}$ is a homeomorphism and $\chi\upharpoonright\check{X}$ is the inverse homeomorphism.

\begin{definition}
A map $g: Z \rightarrow \Sigma(X,x)$ is \textbf{reduced} if the image under $g$ of each connected component $K$ of $Z\setminus g^{-1}(x)$ has nonempty intersection with $\check X$.
\end{definition}

\begin{lemma}\label{loop-red}
Every based loop in $\Sigma(X,x)$ is homotopic to a reduced loop.
\end{lemma}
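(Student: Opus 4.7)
The plan is to homotope $g$ on each ``bad'' subinterval---a component $K$ of $[0,1]\setminus g^{-1}(x)$ whose image misses $\check X$---into the constant path at $x$, leaving $g$ unchanged on every other component and on $g^{-1}(x)$ itself. Since $g^{-1}(x)$ (in particular the endpoints $0,1$) is fixed throughout, the result will be a based homotopy of loops, and by construction the terminal loop will be reduced: the only intervals still mapping into $\mathring X$ come from the original good components, whose images meet $\check X$.

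First I would observe that on any component $K$ of $[0,1]\setminus g^{-1}(x)$ the loop $g$ lands in the open subspace $\mathring X\subseteq\Sigma(X,x)$, so $\chi\circ g$ and the level coordinate $\tau\circ g:K\to(-1,1)$ are both continuous. If $g(K)\cap\check X=\emptyset$, then $\tau\circ g$ avoids $0$, and by the intermediate value theorem it has constant sign on $K$. Hence $g(K)$ lies entirely in one of the two reduced cones $C^\pm\subseteq\Sigma(X,x)$, namely the images of $X\times[0,1]$ and $X\times[-1,0]$.

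Next I would introduce strong deformation retractions of these cones onto $\{x\}$. The formula $(y,t,s)\mapsto(y,\,t+s(1-t))$ on $X\times[0,1]\times[0,1]$ respects all the suspension identifications and descends to a continuous $r^+:C^+\times[0,1]\to C^+$ satisfying $r^+(z,0)=z$, $r^+(z,1)=x$, and $r^+(x,s)=x$ for all $s$; the lower retraction $r^-$ is analogous. I would then define $H:[0,1]\times[0,1]\to\Sigma(X,x)$ by setting $H(t,s)=r^\pm(g(t),s)$ on bad components (with the sign dictated by whichever cone contains $g(K)$) and $H(t,s)=g(t)$ elsewhere. Then $H(\cdot,0)=g$ and $H(\cdot,1)$ is reduced.

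The hard part will be continuity of $H$ at pairs $(t_0,s_0)$ with $t_0\in g^{-1}(x)$, since infinitely many bad components may accumulate at such a point. Continuity within each of the three defining regions is straightforward. For the boundary case, whenever $t_n\to t_0\in g^{-1}(x)$ and $s_n\to s_0$, one has $g(t_n)\to x$, and the fact that each cone retraction is continuous and fixes $x$ forces $H(t_n,s_n)\to x=H(t_0,s_0)$ regardless of which case applies to each $n$. No uniform estimate across bad components is required; the whole argument is bought by the retractions fixing the basepoint.
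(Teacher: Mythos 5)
Your proof is correct and follows essentially the same route as the paper's: both identify the components of $I\setminus g^{-1}(x)$ whose images miss $\check X$, observe that each such component maps into a single open half of the suspension, and push it to the corresponding pole via the same linear homotopy $l\mapsto (1-s)l\pm s$ while fixing everything else. Your packaging of this as a basepoint-fixing cone retraction, together with the continuity check at accumulation points of bad components, is precisely the verification the paper labels ``straightforward.''
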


\begin{proof}
Let $\gamma:I \rightarrow \Sigma(X, x)$ be a loop at $x$.  Let $\Lambda$ be the set of open intervals $(a,b)$ in $I \setminus \gamma^{-1}(x)$ such that $\gamma((a,b)) \cap \check{X}= \emptyset$.  Notice that for each $\lambda \in \Lambda$ we have that $\gamma(\lambda)$ is either entirely contained in the open upper half of $\mathring{X}$ or the open bottom half.  Let $\Lambda^+$ be the set of all $\lambda \in \Lambda$ with each element of $\gamma(\lambda)$ having strictly positive second coordinate and define $\Lambda^-$ similarly.  Define a homotopy $H: I^2 \rightarrow \Sigma(X, x)$  by letting

\[
H(t,s) = \left\{
\begin{array}{ll}
 \gamma(t) 
                                            & \text{if } t\notin \bigcup\Lambda, \\
(x', (1-s)l + s) 
                                            & \text{if }t\in \lambda\in \Lambda^+ \text{ and }\gamma(t) = (x', l), \\
(x', (1-s)l - s) 
                                            & \text{if } t\in \lambda \in \Lambda^- \text{ and }\gamma(t) = (x', l).
\end{array}
\right.
\]

The check that $H$ is continuous and that $H(t, 1)$ is reduced is straightforward.
\end{proof}

\begin{lemma}\label{homot-red}
If two reduced loops in $\Sigma(X,x)$ are homotopic the homotopy can be chosen to be reduced.
\end{lemma}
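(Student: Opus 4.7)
The plan is to adapt the one-dimensional construction of Lemma~\ref{loop-red} to the square. Suppose $\gamma_0, \gamma_1$ are reduced loops and $H: I^2 \to \Sigma(X,x)$ is a homotopy with $H(\cdot, 0) = \gamma_0$, $H(\cdot, 1) = \gamma_1$, and $H(0, s) = H(1, s) = x$. Paralleling the set $\Lambda$ of the previous lemma, let $\mathcal K$ denote the family of connected components $K$ of the open set $I^2 \setminus H^{-1}(x)$ for which $H(K) \cap \check X = \emptyset$. Since $H(K)$ is connected and disjoint from $\{x\} \cup \check X$, it lies entirely in one of the two open halves of $\mathring X \setminus \check X$, and $\mathcal K$ splits accordingly as $\mathcal K^+ \sqcup \mathcal K^-$.

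The crucial new step, with no one-dimensional counterpart, is to show that no $K \in \mathcal K$ touches the boundary of $I^2$. Along the vertical edges this is immediate from $H(0, s) = H(1, s) = x$. For the horizontal edge $s = 0$, suppose $(t, 0) \in K \in \mathcal K^+$ and let $J$ be the connected component of $I \setminus \gamma_0^{-1}(x)$ containing $t$. Then $\gamma_0(J)$ lies in the upper half of $\mathring X$, so $J \times \{0\}$ is a connected subset of $I^2 \setminus H^{-1}(x)$ that meets $K$ and is therefore contained in $K$. But $\gamma_0$ being reduced produces some $t' \in J$ with $\gamma_0(t') \in \check X$, contradicting $H(K) \cap \check X = \emptyset$. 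The same argument handles $s = 1$ and the components in $\mathcal K^-$.

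With edge-avoidance in hand, I would define $H': I^2 \to \Sigma(X,x)$ to agree with $H$ off $\bigcup \mathcal K$ and to be constantly $x$ on each $K \in \mathcal K$. Equivalently, one may apply the interpolation formula of Lemma~\ref{loop-red} fibrewise in an auxiliary third parameter, making $H'$ visibly homotopic to $H$. Continuity on the open sets $\bigcup \mathcal K$ and $I^2 \setminus \overline{\bigcup \mathcal K}$ is clear, and at a boundary point of $\bigcup \mathcal K$ one observes that the point must lie in $H^{-1}(x)$ (since components of an open subset of a locally connected space are open), so continuity of $H$ forces nearby values of $H'$ into any neighborhood of $x$ in the quotient topology. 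Edge-avoidance then ensures $H'(\cdot, 0) = \gamma_0$, $H'(\cdot, 1) = \gamma_1$, and $H'(0, s) = H'(1, s) = x$. Finally $(H')^{-1}(x) = H^{-1}(x) \cup \bigcup \mathcal K$, so every component of its complement contains a component of $I^2 \setminus H^{-1}(x)$ not in $\mathcal K$, whose $H$-image already meets $\check X$; hence $H'$ is reduced.

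The main obstacle is the edge-avoidance claim: without it, collapsing bad components to $x$ would disturb $\gamma_0$ and $\gamma_1$ on the horizontal edges and the modified map would cease to be a homotopy between them. Everything else reduces to checking continuity at $\partial(\bigcup \mathcal K)$, which works precisely because those boundary points already lie in $H^{-1}(x)$.
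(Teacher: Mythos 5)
Your construction is exactly the paper's: collapse to $x$ every component of $I^2\setminus H^{-1}(x)$ whose image misses $\check X$ and keep $H$ elsewhere; the paper states this in two lines and leaves to the reader the edge-avoidance and continuity checks that you supply correctly. One cosmetic point: the intermediate claim that $\gamma_0(J)$ lies in the upper half of $\mathring{X}$ is both false (reducedness forces $\gamma_0(J)$ to meet $\check X$) and unnecessary, since $J\times\{0\}$ avoids $H^{-1}(x)$ simply because $J$ is a component of $I\setminus\gamma_0^{-1}(x)$ --- just delete that sentence.
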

\begin{proof}
Let $\gamma_0$ and $\gamma_1$ be reduced loops and $H$ be a homotopy from $\gamma_0$ to $\gamma_1$ in $\Sigma(X, x)$.  Let $H'(t, s) = H(t,s)$ whenever $(t,s)$ is in a component $K$ of $I^2 \setminus H^{-1}(x)$ which intersects $\check{X}$ and let $H'(t,s)= x$ otherwise.  This new map $H'$ is continuous and reduced.
\end{proof}

We now define and discuss the topologist product operator on groups, which was first defined in \cite{E1}.  Although this construction is fairly general it is sufficient for  our purposes to only consider the topologist product of countably many groups.  If $(G_n)_n$ is a sequence of groups we define the topologist product $\topprod_n G_n$ in the following manner.  Call a map $W:\overline{W} \rightarrow \sqcup_{n\in \mathbb{N}}G_n$ a \textbf{word} if $\overline{W}$ is a countable totally ordered set and for each $n \in \mathbb{N}$ the set $\{i \in \overline{W}: W(i) \in G_n\}$ is finite.  We consider two words $W_0, W_1$ to be equal provided there exists an order isomorphism $f:\overline{W_0} \rightarrow \overline{W_1}$ such that $W_1(f(i)) = W_0(i)$.  Let $\mathcal{W}(G_n)_n$ denote the collection of words.  For each $N\in \mathbb{N}$ define a map $p_N$ via the restriction $p_N(W) = W\upharpoonright\{i\in \overline{W}: (\exists n \leq N) W(i) \in G_n\}$.  Thus $p_N$ maps elements of $\mathcal{W}(G_n)_n$ to finite words which only utilize elements of the set $\bigcup_{n\leq N}G_n$.  Define an equivalence $\sim$ on words by letting $W_0 \sim W_1$ provided for all $N\in \mathbb{N}$ we have $p_N(W_0)$ equals $p_N(W_1)$ in the free product $\ast_{n\leq N}G_n$.  The set $\topprod_n G_n= \mathcal{W}(G_n)_n/ \sim$ has a group structure.  The binary operation is given by word concatenation: $[W_0][W_1] = [W_0W_1]$ where $W_0W_1$ is the word with domain $\overline{W_0} \sqcup \overline{W_1}$ with order which extends the orders of $\overline{W_0}$ and $\overline{W_1}$ and places elements of $\overline{W_0}$ in front of those in $\overline{W_1}$ and

\[
W_0W_1(i) = \left\{
\begin{array}{ll}
 W_0(i) 
                                            & \text{if } i\in \overline{W_0}, \\
W_1(i)
                                            & \text{if } i\in\overline{W_1}.
\end{array}
\right.
\]

\noindent The trivial element is the equivalence class of the empty word and the inverse of an element is given by $[W]^{-1} = [W^{-1}]$, where $W^{-1}$ is the word having the same domain as $W$ under the reversed order and $W^{-1}(i) = (W(i))^{-1}$.

The word map $p_N$ induces a group homomorphism denoted $p_N$ by abuse of notation which is a retraction of $\topprod_n G_n$ to the free product subgroup $\ast_{n\leq N} G_n$.  If the groups $G_n$ are trivial for all $n >N$ then $p_N$ is an isomorphism of the group $\topprod_n G_n$ with itself.  Given $N\in \mathbb{N}$ let $\topprod_{n > N} G_n$ denote the subgroup of $\topprod_n G_n$  consisting of those elements having a word representative $W \in [W]$ such that $\im(W) \subseteq \sqcup_{n = N+1}^{\infty} G_n$.  There is a canonical isomorphism $\topprod_n G_n \simeq (\ast_{n\leq N} G_n) \ast (\topprod_{n > N}G_n)$ obtained by considering a word $W \in \mathcal{W}(G_n)_n$ as a finite concatenation of equivalence classes in $\ast_{n\leq N} G_n$ and equivalence classes in $\topprod_{n > N}G_n$.  Let $p^N$ denote the retraction from $\topprod_{n}G_n$ to $\topprod_{n > N}G_n$.  In all examples that we consider in this paper the groups $G_n$ will be free groups.

The canonical example of a topologist product has $G_n \simeq \mathbb{Z}$, in which case $\topprod_n G_n \simeq \topprod_n \mathbb{Z}$.  Letting $a_n$ be a choice of generator for $G_n \simeq \mathbb{Z}$, any word $W\in \mathcal{W}(\mathbb{Z})_n$ is $\sim$ to a word $W_0$ where $\im(W_0) \subseteq \{a_n^{\pm 1}\}_{n\in \mathbb{N}}$.   This particular topologist product is isomorphic to the fundamental group of the Hawaiian earring mentioned in the introduction.  We give a description of a class of homomorphisms called realizations from $\topprod_n\mathbb{Z}$ to the fundamental group of a space.  Suppose $(Z, z)$ is a Hausdorff space which is first countable at $z$ and that $(\gamma_n)_n$ is a sequence of loops such that for any neighborhood $U$ of $z$ there is an $N\in \mathbb{N}$ such that $\im(\gamma_n) \subseteq U$ for all $n\geq N$.   Let $\mathcal{I}$ denote the set of closed intervals in $I$ whose endpoints are elements of the Cantor ternary set and whose interiors contain no element of the ternary set.  The set $\mathcal{I}$ has a natural order given by letting $I_0 < I_1$ if and only if all points in the interior of $I_0$ are less than all points in $I_1$, and the order type is isomorphic to that of $\mathbb{Q}$.  Thus given a word $W\in \mathcal{W}(\mathbb{Z})_n$ with $\im(W) \subseteq \{a_n^{\pm 1}\}_{n\in \mathbb{N}}$ there exists an order embedding $\iota: \overline{W} \rightarrow \mathcal{I}$ and we let $\Re_{\iota}(W):I \rightarrow Z$ be given by

\[
\Re_{\iota}(W)(t) = \left\{
\begin{array}{ll}
z 
                                            & \text{if }t\in I \setminus \bigcup\im(\iota), \\
\gamma_n(\frac{t - \min(I_0)}{\max(I_0) - \min(I_0)})
                                            & \text{if }t\in I_0 = \iota(i) \text{ and }W(i) = a_n, \\
\gamma_n^{-1}(\frac{t - \min(I_0)}{\max(I_0) - \min(I_0)})
				& \text{if }t\in I_0 = \iota(i) \text{ and }W(i) = a_n^{-1}.

\end{array}
\right.
\]

The check that the homotopy class of $\Re_{\iota}(W)$ does not depend on the order embedding $\iota$ is straightforward, so that one can write $\Re(W)$ as a loop well defined up to a homotopy taking place entirely in the range of $\Re_{\iota}(W)$ for any embedding $\iota$.  The check that $\Re(W_0)$ is homotopic to $\Re(W_1)$ via a homotopy in $\im(\Re(W_0))\cup \im(\Re(W_1))$ whenever $W_0 \sim W_1$ is more challenging.  Finally one checks that $\Re:\topprod_n \mathbb{Z} \rightarrow \pi_1(Z, z)$ is a homomorphism.

For example, the Hawaiian earring is formally the subspace $$E=\bigcup_{n\in \mathbb{N}} C((0, \frac{1}{n+2}),\frac{1}{n+2})$$ of $\mathbb{R}^2$ where $C(p, r)$ denotes the circle centered at $p$ of radius $r$.  Thus $E$ is a shrinking wedge of countably many circles in the plane with wedge point $(0,0)$.  For each $n\in \mathbb{N}$ we let $\gamma_n:I \rightarrow E$ be the loop $\gamma_n(t) = (\frac{1}{n+2}\sin(2\pi t),\frac{1}{n+2} - \frac{1}{n+2}\cos(2\pi t))$.  Thus the homotopy class $[\gamma_n]$ generates the fundamental group $\pi_1(C((0, \frac{1}{n+2}), \frac{1}{n+2}))$.  The associated realization map is an isomorphism.  The retraction map $p_N: \topprod_n\mathbb{Z} \rightarrow \ast_{n\leq N} \mathbb{Z}$ is the induced map of the topological retraction which fixes the outer circles $\bigcup_{n\leq N} C((0, \frac{1}{n+2}),\frac{1}{n+2})$ and sends all other points to $(0, 0)$.

We note that if $(r_n)_n$ is a sequence of finite natural numbers which is not eventually $0$ then $\topprod_n \mathbb{Z} \simeq \topprod_n F(r_n)$ by enumerating the disjoint union $\sqcup_{n=0}^{\infty} r_n$ via a bijection $g: \mathbb{N} \rightarrow \sqcup_{n=0}^{\infty} r_n$ and defining an isomorphism by mapping $a_m$ to the free generator $g(m)$.

\end{section}

\begin{section}{Proof of Theorem \ref{tpd-sbc}}\label{tpd}
We give the proof of Theorem \ref{tpd-sbc}, beginning with the reverse direction.  Let $f: (X,x) \rightarrow (Y,y)$ be a bijective map between totally path disconnected spaces, with $f$ continuous at $x$ and $f^{-1}$ continuous at $y$. Then there is an induced map $\varphi: \Sigma(X,x) \rightarrow \Sigma(Y,y)$ defined by $\varphi((p,t)):=(f(p),t)$ if $(p,t)\in \mathring{X}$, and $\varphi(x):=y$. This map is well-defined, however it need not be continuous, for neither is $f$. Nevertheless, it will suffice in a more specific setting:

Recall that a \textbf{Peano continuum} is a path connected, locally path connected compact metric space.  The classical Hahn-Mazurkiewicz theorem states that a Hausdorff space is a Peano continuum if and only if it is the continuous image of the closed interval $I$.  Peano continua are obviously closed under continuous Hausdorff images.

\begin{lemma}\label{PeanoReducedCont}
If a continuous map $g$ from a Peano continuum $Z$ to $\Sigma(X,x)$ is reduced, the composite map $\varphi\circ g: Z \rightarrow \Sigma(Y,y)$ is also continuous.
\end{lemma}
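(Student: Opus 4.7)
The plan is to verify continuity of $\varphi \circ g$ pointwise, splitting into the cases $z \notin g^{-1}(x)$ and $z \in g^{-1}(x)$.

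At a point $z \notin g^{-1}(x)$, local path connectedness of the Peano continuum $Z$ makes the component $K$ of $z$ in the open set $Z \setminus g^{-1}(x)$ itself open and path connected. The image $g(K)$ is path connected in $\Sigma(X,x) \setminus \{x\} = \mathring{X} = (X \setminus \{x\}) \times D_1^\circ$, and since $X$ is totally path disconnected the path components of $\mathring{X}$ are exactly the strands $\{p\} \times D_1^\circ$; hence $g(K) \subseteq \{p_K\} \times D_1^\circ$ for a unique $p_K \in X \setminus \{x\}$. The restriction of $\varphi$ to this strand is the homeomorphism $(p_K, s) \mapsto (f(p_K), s)$ onto $\{f(p_K)\} \times D_1^\circ$, so $\varphi \circ g$ is continuous on the open neighborhood $K$ of $z$.

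For $z_0 \in g^{-1}(x)$ I argue by sequential continuity, which is valid since $Z$ is metrizable. By separability and local connectedness the components $\{K_n\}_{n \in \mathbb{N}}$ of $Z \setminus g^{-1}(x)$ form a countable family, each with $g(K_n) \subseteq \{p_n\} \times D_1^\circ$. The key intermediate claim is that $(g(\overline{K_n}))_n$ is a null sequence at $x$: for every open neighborhood $U \ni x$ in $\Sigma(X,x)$, $g(\overline{K_n}) \subseteq U$ for all but finitely many $n$. Otherwise compactness of $Z$ yields a convergent subsequence $z_{n_k} \to z_*$ with $z_{n_k} \in \overline{K_{n_k}}$ for distinct $n_k$ and $g(z_{n_k}) \notin U$; then $g(z_*) \neq x$, so $z_*$ lies in some component $K_m$, forcing $z_{n_k} \in K_m$ for large $k$. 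But for $n_k \neq m$ the components are disjoint, and $z_{n_k} \in K_m \cap \overline{K_{n_k}} \subseteq K_m \cap g^{-1}(x) = \emptyset$, a contradiction.

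Reducedness then enters decisively: since $(p_n, 0) \in g(\overline{K_n})$, applying the null property to the neighborhood $(U_1 \times D_1 \cup X \times (D_1 \setminus [-a, a]))/\sim$ of $x$, for any open $U_1 \ni x$ in $X$ and $a \in (0,1)$, forces $p_n \in U_1$ for all large $n$, so $p_n \to x$ in $X$. Continuity of $f$ at $x$ now yields $f(p_n) \to y$, and for any open $W \ni y$ in $\Sigma(Y,y)$ the tube lemma produces an open $V \ni y$ in $Y$ with $V \times D_1$ inside the preimage of $W$; since $f(p_n) \in V$ eventually, $\varphi(g(\overline{K_n})) \subseteq W$ for almost all $n$. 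Sequential continuity at $z_0$ follows by analyzing $z_m \to z_0$: on any subsequence either $z_m \in g^{-1}(x)$ cofinally (so the image is $y$), or $z_m$ lies cofinally in a fixed $K_{n_0}$ (and $\varphi \circ g|_{\overline{K_{n_0}}}$ is continuous because $\varphi$ is a homeomorphism between the circles $\{p_{n_0}\} \times D_1^\circ \cup \{x\}$ and $\{f(p_{n_0})\} \times D_1^\circ \cup \{y\}$), or the strand indices tend to infinity (and the null property places $\varphi(g(z_m))$ in any given $W$ eventually). The main obstacle is the reducedness step: without the guarantee $(p_n, 0) \in g(\overline{K_n})$, a strand could hug only a polar region $\{p_n\} \times (a, 1)$, making $(g(\overline{K_n}))_n$ null at $x$ while the basepoints $p_n$ drift away from $x$, and then $f(p_n)$ would be uncontrolled since $f$ is continuous only at $x$.
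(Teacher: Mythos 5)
Your proof is correct, and in the crucial case it takes a genuinely different route from the paper's. Both arguments agree on the easy part (on a component $K$ of $Z\setminus g^{-1}(x)$, total path disconnectedness of $X$ confines $g(K)$ to a single strand $\{p_K\}\times D_1^\circ$, where $\varphi$ is harmless), but they diverge at the basepoint. The paper argues sequentially at an arbitrary point: given $z_n\to z$ with infinitely many distinct values $\chi\circ g(z_{n_k})\neq x$, it uses reducedness to pick, in each relevant component, a representative $m_k$ with $g(m_k)=(\chi\circ g(z_{n_k}),0)\in\check X$, passes to a convergent subsequence of the $m_k$, and then exploits arcwise connectivity of the Peano continuum to thread a path through these points; since consecutive points lie in distinct components of $Z\setminus g^{-1}(x)$, the image path must hit $x$ between them, forcing $g(m_{k_i})\to x$ and hence $\chi\circ g(z_{n_{k_i}})\to x$. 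You instead first prove, using only compactness and local connectedness (openness and disjointness of the components $K_n$, plus $\overline{K_n}\setminus K_n\subseteq g^{-1}(x)$), that the family $(g(\overline{K_n}))_n$ is null at $x$ --- a statement that needs no reducedness at all --- and only then invoke reducedness to plant the anchor $(p_n,0)\in g(\overline{K_n})\cap\check X$, so that nullness against basepoint neighborhoods of the form $q(U_1\times D_1\cup X\times(D_1\setminus[-a,a]))$ forces $p_n\to x$. Your version isolates the role of reducedness into a single transparent step and replaces the path-threading argument with a standard null-family lemma; the paper's version avoids enumerating components and works directly with the given sequence. Your concluding subsequence trichotomy is the same sub-subsequence principle the paper uses, and your remark about a strand hugging a polar region correctly identifies why reducedness cannot be dropped. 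The argument is complete as written.
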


\begin{proof}  As $Z$ is first countable, it suffices to show that whenever a sequence $(z_n)_n$ converges to $z\in Z$, the image sequence $(\varphi\circ g(z_n))_n$ converges to $\varphi\circ g(z)$. We study two possibilities:

First assume $T := \{\chi\circ g(z_n) : n \in \mathbb{N}\} \cup \{x\}$ is finite. Then the sequence $(g(z_n))_n$ and its limit $g(z)$ are contained in $\Sigma(T,x)\subseteq \Sigma(X,x)$, a bouquet of finitely many circles.  Clearly, $\varphi$ induces a homeomorphism from $\Sigma(T,x)$ to $\Sigma(f(T),y)$, so the sequence $\varphi \circ g(z_n)$ converges to $\varphi \circ g(z)$.

Otherwise consider an infinite subsequence $(z_{n_k})_k$ of $(z_n)_n$ so that $\chi\circ g(z_{n_k})$ are all distinct and not equal to $x$.  Because $g$ is reduced, we may choose to each $z_{n_k}$ an $m_k$ in the same component of $Z\setminus g^{-1}(x)$ with $g(m_k) \in \check{X}$, or more precisely, $g(m_k) = (\chi \circ g(z_{n_k}), 0)$.  As $Z$ is compact, there is a subsequence $(z_{n_{k_i}})_i$ such that the corresponding $m_{k_i}$ converge. $Z$ is even a Peano continuum, hence there is a path $\rho$ with $\rho \big(1-\frac{1}{i+1}\big) = m_{k_i}$.  Since these points are in different components in $Z$, the path $g\circ \rho$ in $\Sigma(X,x)$ hits the base point $x$ on each subinterval $\big(1-\frac{1}{i+1},1-\frac{1}{i+2}\big)$, so $g\circ \rho(1)$ has to be $x$. This then implies the sequence $(g(m_{k_i}))_i$ converges to $x$, which in turn requires $\chi \circ g(z_{n_{k_i}}) = \chi \circ g(m_{k_i})$ to converge to $x$. In particular this shows that $g(z) = x$. Further, $f \circ \chi \circ g (z_{n_{k_i}}) = \chi \circ \varphi \circ g (z_{n_{k_i}})$ converges to $f(x) = y$, and so also does $\varphi \circ g (z_{n_{k_i}})$.

Combining this with the argument from the first case shows that every subsequence $(z_{n_k})_k$ of $(z_n)_n$ has a sub-subsequence $(z_{n_{k_i}})_i$ with $\varphi \circ g (z_{n_{k_i}}) \rightarrow y$. Fortunately this property is adequate evidence for $(\varphi \circ g (z_n))_n$ converging to $\varphi \circ g(z) = y$.
\end{proof}

We return to the proof of the theorem. By virtue of $\varphi$, every reduced loop $\gamma$ in $\Sigma(X,x)$ is matched up with a reduced loop $\varphi\circ \gamma$ in $\Sigma(Y,y)$. Now by Lemma \ref{loop-red} any element in $\pi_1(\Sigma(X,x))$ can be represented by a reduced loop. Supposing two reduced loops $\gamma, \delta$ represent the same class, then by Lemma \ref{homot-red} there is a reduced homotopy $H$ between them. Clearly, $\varphi\circ H$ is a homotopy between $\varphi\circ \gamma$ and $\varphi\circ \delta$, continuous again thanks to the previous lemma. 

Hence, the map $\psi: \pi_1(\Sigma(X,x)) \rightarrow \pi_1(\Sigma(Y,y))$, given by $[\gamma] \mapsto [\varphi\circ\gamma]$ for reduced representatives $\gamma$, is well-defined. Finally, $\psi([\gamma] [\delta]) = \psi([\gamma]) \psi([\delta])$, evidently by construction, thus $\psi$ is a homomorphism. Since it is invertible, it actually is an isomorphism, $\pi_1(\Sigma(X,x)) \simeq \pi_1(\Sigma(Y,y))$.  This completes the proof of the reverse direction of Theorem \ref{tpd-sbc}.

For the forward direction of Theorem \ref{tpd-sbc} we build up some notation and develop some machinery about sequences of cardinal numbers.  Then we characterize the fundamental group of $(X, x)$, a totally path disconnected Hausdorff space which is first countable at the distinguished point.

If $(\lambda_n)_n$ and $(\mu_n)_n$ are two sequences of cardinal numbers we write $(\lambda_n)_n \approx (\mu_n)_n$ provided there exists a bijection $f:\sqcup_n \lambda_n \rightarrow \sqcup_n \mu_n$ such that for each $M\in \mathbb{N}$ there exists $M'\in \mathbb{N}$ such that $f(\sqcup_{n=0}^M \lambda_n)\subseteq \sqcup_{n=0}^{M'} \mu_n$ and $f^{-1}(\sqcup_{n=0}^{M}\mu_n) \subseteq \sqcup_{n=0}^{M'}\lambda_n$.  The relation $\approx$ is an equivalence relation.  We remark that if $g: \mathbb{N} \rightarrow \mathbb{N}$ is a bijection then given any sequence of cardinals $(\lambda_n)_n$ we have $(\lambda_n)_n \approx (\lambda_{g(n)})_n$.  More generally if $g:\mathbb{N} \rightarrow \mathbb{N}$ is a surjective, finite-to-one function we have $(\lambda_n)_n \approx (\mu_n)_n$ where $\mu_n = \sum_{m\in g^{-1}(n)}\lambda_m$.

\begin{lemma}\label{equivalence}  $(\lambda_n)_n \approx (\mu_n)_n$ if and only if the following hold:

\begin{enumerate}

\item  $(\lambda_n)_n$ is eventually zero iff $(\mu_n)_n$ is

\item  If $\kappa$ is an infinite cardinal then $(\lambda_n)_n$ is eventually $<\kappa$ iff $(\mu_n)_n$ is

\item  For each $M \in \mathbb{N}$ there exists $M'$ such that $\sum_{n=0}^M \lambda_n \leq \sum_{n=0}^{M'}\mu_n$ and $\sum_{n=0}^M \mu_n \leq \sum_{n=0}^{M'}\lambda_n$

\end{enumerate}

\end{lemma}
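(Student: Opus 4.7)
The plan is to verify the two directions separately. For the forward direction, that $(\lambda_n)_n \approx (\mu_n)_n$ implies (1)--(3), one just unpacks the definition of $\approx$ via a bijection $f:\sqcup_n \lambda_n \rightarrow \sqcup_n \mu_n$. For (1), if $\lambda_n = 0$ for $n > N$ then $\sqcup_n \lambda_n = \sqcup_{n \leq N}\lambda_n$, so $\sqcup_n \mu_n = f(\sqcup_{n \leq N} \lambda_n) \subseteq \sqcup_{n \leq M'} \mu_n$ for some $M'$, forcing $\mu_n = 0$ for $n > M'$. For (3), the cardinality of a disjoint union is a cardinal sum, and injectivity of $f$ combined with the containment in the definition of $\approx$ directly yields the desired inequalities (after taking the max of the $M'$'s coming from $f$ and $f^{-1}$).

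Condition (2) takes a touch more work. Assuming $\lambda_n < \kappa$ for $n \geq N$, pick $M'$ with $f(\sqcup_{n < N} \lambda_n) \subseteq \sqcup_{n \leq M'} \mu_n$. For any $m > M'$, the tail property of $f^{-1}$ applied to $M = m$ gives $L$ with $f^{-1}(\sqcup_{n \leq m} \mu_n) \subseteq \sqcup_{n \leq L} \lambda_n$. Thus $f^{-1}(\mu_m) \subseteq \sqcup_{N \leq n \leq L} \lambda_n$, a finite disjoint union of sets each of cardinality $< \kappa$, hence of cardinality $< \kappa$ for infinite $\kappa$, so $\mu_m < \kappa$.

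For the reverse direction, assume (1)--(3). First, if both sequences are eventually zero (these cases coincide by (1)), then by (3) the finite totals agree and any bijection between $\sqcup_n \lambda_n$ and $\sqcup_n \mu_n$ trivially satisfies the tail condition, since the supports are bounded. Otherwise, using (3) I would build strictly increasing sequences $(A_k)$ and $(B_k)$ tending to infinity with the interlocking inequalities $a_{A_0} \leq b_{B_0} \leq a_{A_1} \leq b_{B_1} \leq \cdots$, where $a_M = \sum_{n \leq M}\lambda_n$ and $b_M = \sum_{n \leq M}\mu_n$, and then construct $f$ by a back-and-forth so that after stage $k$ the partial bijection $f_k$ sends $\sqcup_{n \leq A_k}\lambda_n$ onto some $S_k$ with $\sqcup_{n \leq B_{k-1}}\mu_n \subseteq S_k \subseteq \sqcup_{n \leq B_k}\mu_n$; the union $f = \bigcup_k f_k$ is then a bijection with the required tail-preservation property.

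The main obstacle is the cardinal bookkeeping at each extension $f_k \subseteq f_{k+1}$: the new block $\sqcup_{A_k < n \leq A_{k+1}}\lambda_n$ must biject onto $S_{k+1} \setminus S_k$, which is required to contain the unused portion $\sqcup_{n \leq B_k}\mu_n \setminus S_k$ and to lie inside $\sqcup_{n \leq B_{k+1}}\mu_n$. Condition (3) ensures the totals line up, but condition (2) is decisive here: if $(\mu_n)_n$ is not eventually $< \kappa$ for some infinite $\kappa$, then neither is $(\lambda_n)_n$, so large-cardinal $\mu_n$-blocks can always be absorbed by correspondingly large $\lambda_n$-blocks of matching cardinal type, and the extension goes through. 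With the three conditions in hand, the block-by-block construction can be carried out.
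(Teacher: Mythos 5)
Your forward direction is fine; in fact your argument for (2) (bounding $f^{-1}(\mu_m)$ inside a finite union $\sqcup_{N\le n\le L}\lambda_n$ of sets each of cardinality $<\kappa$) is a clean, direct version of what the paper does by contradiction. The gap is in the reverse direction, and it sits exactly where you say the "main obstacle" is: you name the cardinal bookkeeping problem and then assert, rather than prove, that "the extension goes through." The difficulty is that condition (3) controls only the partial sums $a_{A_k}=\sum_{n\le A_k}\lambda_n$ and $b_{B_k}=\sum_{n\le B_k}\mu_n$, whereas your invariant requires controlling the \emph{leftover} $\sqcup_{n\le B_k}\mu_n\setminus S_k$, and cardinal subtraction is not determined by the sums. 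Concretely, take $\lambda_0=\mu_0=\aleph_1$ and $\lambda_n=\mu_n=\aleph_0$ for $n\ge1$ (which satisfy (1)--(3) and are indeed $\approx$). If at stage $0$ you happen to biject $\lambda_0$ onto a subset $S_0\subseteq\mu_0$ with $|\mu_0\setminus S_0|=\aleph_1$ --- which your interlocking inequalities $a_{A_0}\le b_{B_0}\le a_{A_1}\le\cdots$ do nothing to forbid, since $|S_0|=|\mu_0|=\aleph_1$ --- then no finite collection of later $\lambda$-blocks, each countable, can ever absorb $\mu_0\setminus S_0$, and the invariant $\sqcup_{n\le B_k}\mu_n\subseteq S_{k+1}$ is unrecoverable. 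So the back-and-forth must be set up so that the leftover at each stage is \emph{provably} small enough for the next block of the other sequence, and neither (2) nor (3) hands you that for free.

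This is precisely what the bulk of the paper's proof is doing and what your sketch omits: it first splits off the eventually-finite cases (where any bijection works because every finite subset of $\sqcup_n\mu_n$ meets only finitely many blocks), and in the remaining case it regroups both sequences so that all terms are infinite, monotone nondecreasing, and --- when the bounding cardinal $\kappa$ is a limit --- strictly interleaved as $\lambda_1<\mu_1<\lambda_2<\mu_2<\cdots$. The strict interleaving is the engine: it forces $|\mu_k\setminus f(\lambda_k)|\le\mu_k<\lambda_{k+1}$ at every step, so the leftover always fits in the very next block. When $\kappa$ is a successor the interleaving is impossible (both sequences stabilize at the same $\eta$), and that subcase needs a separate argument via a common normal form. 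Your proposal has neither the normalization nor the successor/limit dichotomy, and the vague appeal to condition (2) ("blocks of matching cardinal type") does not substitute for them: (2) constrains eventual behaviour of the sequences, not the block-by-block cardinal matching that the construction actually requires. To repair the proof you would need to add these reductions (or an equivalent device for keeping the leftovers small) before running the back-and-forth.
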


\begin{proof} ($\Rightarrow$)  Suppose that $(\lambda_n)_n \approx (\mu_n)_n$ and select $f$ as in the definition of $\approx$.  If without loss of generality $\lambda_n = 0$ for all $n \geq M$ then selecting $M'$ as in the definition of $\approx$ we see that $f(\sqcup_{n=0}^M \lambda_n) \subseteq \sqcup_{n=0}^{M'}\mu_n$ and since $f$ is onto we have that $\mu_n = 0$ for $n > M'$.  Thus (1) holds.  If without loss of generality we have $\lambda_n <\kappa$ for $n \geq M$ then we again select $M'$ as in the definition of $\approx$.  If $\mu_m \geq \kappa$ with $m>M'$ then since $f^{-1}$ is bijective we must have $f^{-1}(\mu_m) \subseteq \sqcup_{n=M+1}^{\infty} \lambda_n$.  Since $\lambda_n<\kappa$ for all $n \geq M+1$ we see that $f^{-1}(\mu_m)$ is not a subset of $\sqcup_{n=M+1}^L \lambda_n$ for any $L\in \mathbb{N}$, so $f^{-1}(\mu_m)$ is not a subset of $\sqcup_{n=0}^L \lambda_n$ for any $L\in \mathbb{N}$, a contradiction.  Thus (2) holds.  That (3) holds is clear from the existence of $f$.

($\Leftarrow$)  Suppose the sequences $(\lambda_n)_n$ and $(\mu_n)_n$ satisfy (1)-(3).  We treat cases.  If one, and therefore both, is eventually zero then we can select $M$ such that $\lambda_n = 0 = \mu_n$ for $n\geq M$.  By (3) we know that $\sum_{n\leq M}\lambda_n = \sum_{n\leq M}\mu_n$ and letting $f$ be any bijection $f:\sqcup_{n=0}^M \lambda_n \rightarrow \sqcup_{n=0}^M \mu_n$ it is clear that $f$ witnesses that $(\lambda_n)_n \approx (\mu_n)_n$.

Next, suppose that both sequences are eventually finite and not eventually zero.  If the $\lambda_n$ are all finite then the $\mu_n$ must all be finite as well, by (3).  In this case we have $\sum_{n}\lambda_n = \aleph_0 = \sum_{n}\mu_n$ and any bijection $f: \sqcup_{n=0}^{\infty}\lambda_n \rightarrow \sqcup_{n=0}^{\infty}\mu_n$ demonstrates $(\lambda_n)_n \approx (\mu_n)_n$.  The case where all the $\mu_n$ are finite is symmetric.  In case the $\lambda_n$ (and therefore also the $\mu_n$) are not all finite, select $M$ such that $n\geq M$ implies $\lambda_n$ and $\mu_n$ are both finite.  By assumption we have $\sum_{n=0}^M\lambda_n$ and $\sum_{n=0}^M\mu_n$ are both infinite cardinals, and (3) implies that $\sum_{n=0}^M\lambda_n=\sum_{n=0}^M\mu_n$.  In this case we start with $f':\sqcup_{n=0}^M \lambda_n \rightarrow \sqcup_{n=0}^M \mu_n$ being any bijection and as $\sum_{n= M+1}^{\infty} \lambda_n = \aleph_0 = \sum_{n= M+1}^{\infty} \mu_n$ we can extend such an $f'$ to a bijection $f: \sqcup_{n=0}^{\infty}\lambda_n \rightarrow \sqcup_{n=0}^{\infty}\mu_n$, and this $f$ witnesses $(\lambda_n)_n \approx (\mu_n)_n$.

In our last case we suppose that both sequences are not eventually finite.  We first modify the sequences to make the proof simpler.  Let $n_0<n_1<n_2<\cdots$ be the subsequence in $\mathbb{N}$ such that $\lambda_{n_k}$ is infinite for each $k\in \mathbb{N}$.  We notice that the sequence $(\lambda_k')_k \approx (\lambda_n)_n$ where $\lambda_0' = \sum_{n=0}^{n_0}\lambda_n$ and for $k\geq 1$ we have $\lambda_{k}' = \sum_{n = n_{n_{k-1}}}^{n_{k}}\lambda_k$.  Making the same modification to the $\mu_n$, the conditions (1)-(3) are preserved for the pair of sequences $(\lambda_n')_n$ and $(\mu_n')_n$.  Thus we may assume that the $\lambda_n$ and $\mu_n$ are always infinite.  Let $\kappa$ be the least infinite cardinal such that $(\lambda_n)_n$ is eventually strictly less than $\kappa$, so that by (2) we know that $\kappa$ satisfies the same condition for $(\mu_n)_n$.  Note also that we may assume that $\kappa>\lambda_n, \mu_n$ for $n\geq 1$.  This is because we may select $M$ large enough that $\lambda_n, \mu_n<\kappa$ for all $n>M$ and letting $\lambda_0' = \sum_{n=0}^M \lambda_n$ and $\lambda_k' = \lambda_{k + M}$ for $k>0$.  Defining $\mu_n'$ similarly we get new sequenes $(\lambda_k')_k$ and $(\mu_k')_k$ such that $(\lambda_n)_n \approx (\lambda_k')_k$, $(\mu_k')_k \approx (\mu_n)_n$ and the sequence pair $(\lambda_k')_k$ and $(\mu_k')_k$ still satisfies (1)-(3).  We may also assume that $\lambda_1 \leq \lambda_2\leq \lambda_3<\cdots$ and similarly $\mu_1\leq \mu_2 \leq \mu_3 \leq \cdots$.  This we can do by noticing that since $\kappa >\lambda_n$ for all $n \geq 1$ we may select a subsequence starting $n_0 = 0, n_1 = 1$ and letting $n_2$ be the least index $>1$ such that $\lambda_{n_2}\geq \lambda_{n_1}$.  In general for $k \geq 1$ we let $n_{k+1}$ be the least index such that $\lambda_{n_{k+1}} \geq \lambda_{n_k}$.  Such selections are always possible since $\kappa$ was chosen to be the smallest cardinal which was eventually strictly larger than the $(\lambda_n)_n$ and the $(\mu_n)_n$.  Let $\lambda_0' = \lambda_0$, $\lambda_1' = \lambda_1$ and $\lambda_k' = \sum_{n = n_{k-1}}^{n_k}\lambda_n$ for $k \geq 2$.  Performing the same modifications on the sequence $(\mu_n)_n$ gives us new sequences $(\lambda_k')_k \approx (\lambda_n)_n$, $(\mu_k')_k \approx (\mu_n)_n$ with $(\lambda_k')_k$ and $(\mu_k')_k$ satisfying (1)-(3) with respect to each other.  Thus we may assume that $(\lambda_n)_n \approx (\mu_n)_n$, that all $\lambda_n$ and $\mu_n$ are infinite, that $\kappa$ is the least cardinal greater than $\lambda_n$ and than $\mu_n$ for $n\geq 1$, and that $\lambda_1 \leq\lambda_2\leq \cdots$ and $\mu_1\leq \mu_2 \leq \cdots$.  It is possible that $\lambda_0> \lambda_1$ and that $\mu_0 > \mu_1$.  We treat subcases.

\textbf{Subcase 1}  If $(\lambda_n)_n$ is eventually constant then $\kappa$ is a successor cardinal, say $\kappa = \eta^+$, and this implies that $(\mu_n)_n$ also stabilizes at $\eta$.  If in addition we have $\lambda_0\geq \kappa$ then by (3) we know that $\mu_0 = \lambda_0 \geq \kappa$.  Letting $(\lambda_k')_k$ be given by $\lambda_0' = \lambda_0$ and $\lambda_k' = \eta$ for $k \geq 1$ it is easy to see that $(\lambda_n)_n \approx (\lambda_k')_k \approx (\mu_n)_n$ and we have $(\lambda_n)_n \approx (\mu_n)_n$.  If instead $\lambda_0 <\kappa$ then by (3) we know that $\mu_0< \kappa$ as well.  Letting $\lambda_k' = \eta$ for all $k\in \mathbb{N}$ we get once more that $(\lambda_n)_n \approx (\lambda_k')_k \approx (\mu_n)_n$ and so we are done with this subcase.

\textbf{Subcase 2}  If $(\lambda_n)_n$ is not eventually constant then $\kappa $ is a limit cardinal.  Then $(\mu_n)_n$ cannot stabilize either and we have $\kappa = \sup\{\lambda_n\}_{n=1}^{\infty} = \sup\{\mu_n\}_{n=1}^{\infty}$.  If in addition we have $\lambda_0 \geq \kappa$ then by (3) we have $\mu_0 = \lambda_0 \geq \kappa$.  By gathering finite segments as before we may assume that $\lambda_1< \mu_1<\lambda_2 < \mu_2< \cdots$.  Let $f:\sqcup_{n=0}^{\infty}\lambda_n \rightarrow \sqcup_{n=0}^{\infty}\mu_n$ be defined by letting $f\upharpoonright\lambda_0$ be a bijection of $\lambda_0$ with $\mu_0$, $f(\lambda_1)\subset\mu_1$, $f^{-1}(\mu_1 \setminus f(\lambda_1)) \subset \lambda_2$, $f(\lambda_2 \setminus f^{-1}(\mu_1)) \subset \mu_2$, etc.  (such an $f$ is defined by induction).  This gives $(\lambda_n)_n \approx (\mu_n)_n$.  Finally, if $\lambda_0<\kappa$ then $\mu_0<\kappa$ as well (by (3)) and we perform a similar back-and-forth argument to obtain $(\lambda_n)_n \approx (\mu_n)_n$.

\end{proof}

\begin{lemma}\label{onlycompact} If $\gamma: I \rightarrow \Sigma(X, x)$ is a based loop then the only limit point of $\gamma(I) \cap \check X$ in $\Sigma(X, x)$ can be $x$.
\end{lemma}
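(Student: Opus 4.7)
My plan is to argue by contradiction. Suppose $p \in \Sigma(X,x)$ with $p \neq x$ is a limit point of $A := \gamma(I) \cap \check X$. Since $\check X$ is closed in $\Sigma(X,x)$, $p$ lies in $\check X$, so $p = (x_\infty, 0)$ for some $x_\infty \in X \setminus \{x\}$; in particular $p \in \mathring X$. Set $N := \gamma^{-1}(p)$.

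The central observation is that the restriction of $\chi \circ \gamma$ to the open set $\gamma^{-1}(\mathring X) \subseteq I$ is continuous (even though $\chi$ fails to be continuous on all of $\Sigma(X,x)$), and takes values in $X \setminus \{x\}$, which is totally path disconnected as a subspace of $X$. Each connected component of $\gamma^{-1}(\mathring X)$ is an open subinterval of $I$ and is therefore path connected, so its image under $\chi \circ \gamma$ is a path connected subset of $X \setminus \{x\}$, i.e.\ a singleton. Let $W^* \subseteq I$ denote the union of those components of $\gamma^{-1}(\mathring X)$ that meet $N$; on each such component $\chi \circ \gamma \equiv x_\infty$, whence $\gamma(W^*) \subseteq \{x_\infty\} \times (-1, 1)$ and consequently $\gamma(W^*) \cap \check X = \{p\}$.

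The complement $I \setminus W^*$ is closed in $I$, hence compact, so $C := \gamma(I \setminus W^*)$ is compact in $\Sigma(X,x)$; by construction $N \subseteq W^*$, so $p \notin C$. Invoking Hausdorffness of $\Sigma(X,x)$ (a standard consequence of $X$ Hausdorff with $\{x\}$ closed), $C$ is closed and there is an open neighborhood $U$ of $p$ disjoint from $C$. Then
\[ U \cap A \;=\; U \cap \gamma(W^*) \cap \check X \;\subseteq\; \gamma(W^*) \cap \check X \;=\; \{p\}, \]
so $p$ is isolated in $A$, contradicting the assumption that it was a limit point.

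The main point requiring care is verifying Hausdorffness of $\Sigma(X,x)$ in order to separate $p$ from the compact image $C$; granting that, the argument hinges on the tpd hypothesis alone, which is what forces $\chi \circ \gamma$ to be locally constant on the open set $\gamma^{-1}(\mathring X)$.
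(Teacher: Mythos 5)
Your argument is correct, and it takes a genuinely different route from the paper's. The paper works in the image: by the Hahn--Mazurkiewicz theorem $\gamma(I)$ is a Peano continuum, hence locally path connected, so a putative limit point $x'\neq x$ of $\gamma(I)\cap\check X$ would have a path connected neighbourhood inside the open set $\mathring{X}\cap\gamma(I)$; that neighbourhood must contain a second point of $\gamma(I)\cap\check X$, and a connecting path in $\mathring{X}$ projects under $\chi$ to a nonconstant path in $X$, contradicting total path disconnectedness. You work in the domain instead: local connectedness of $I$ makes the components of the open set $\gamma^{-1}(\mathring{X})$ intervals, on which $\chi\circ\gamma$ is continuous and hence constant by the tpd hypothesis, and you then isolate $p$ in $\gamma(I)\cap\check X$ by separating it (via Hausdorffness) from the compact image of the complement of the components through $\gamma^{-1}(p)$. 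Both proofs rest on the same two pillars --- Hausdorffness of $\Sigma(X,x)$ and total path disconnectedness of $X$ (a standing hypothesis of the section, which you rightly flag as essential) --- but yours avoids invoking Hahn--Mazurkiewicz and local path connectedness of continuous images of $I$, at the cost of the extra compactness-and-separation step at the end; the paper's is shorter given that it uses the Peano-continuum machinery elsewhere anyway.
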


\begin{proof}  The space $\Sigma(X, x)$ is Hausdorff since $X$ is Hausdorff.  Then $\gamma(I)$ is a Peano continuum, and in particular is locally path connected and compact.  The set $\check X$ is closed in $\Sigma(X, x)$ and so $\check X \cap \gamma(I)$ is compact.  Thus $\check X \cap \gamma(I)$ is closed in $\Sigma(X, x)$.  If $x'$ is a limit point of $\check X \cap \gamma(I)$ in $\Sigma(X, x)$ then $x'\in \check X \cap \gamma(I)$, and if $x' \neq x$ then $x'\in \mathring{X}$.  As $\gamma(I)$ is locally path connected there exists a path connected neighborhood of $x'$ inside $\mathring{X} \cap \gamma(I)$.  This is impossible since any neighborhood in $\mathring{X}$ of $x'$ contains a point in $\gamma(I) \cap \check X \setminus \{x'\}$ and such a point cannot be connected to $x'$ by a path in $\mathring{X}$ since $X$ is totally path disconnected.
\end{proof}

If $x\in K\subseteq X$ we may consider $\Sigma(K, x)$ to be a subspace of $\Sigma(X, x)$.  There is a map $p_{K}: \pi_1(\Sigma(X,x)) \rightarrow \pi_1(\Sigma(K, x))$ given by $[\gamma] \mapsto [\gamma']$ where

\[
\gamma'(t) = \left\{
\begin{array}{ll}
\gamma(t)
                                            & \text{if }\gamma(t) \in \Sigma(K, x), \\
x
                                            & \text{otherwise}.
\end{array}
\right.
\]

\noindent It is straightforward to prove that this map is a well-defined homomorphism.  Moreover, this map composes with the homomorphism $i_{*}: \pi_1(\Sigma(K, x)) \rightarrow \pi_1(\Sigma(X, x))$ induced by inclusion to give the identity map on $\pi_1(\Sigma(K, x))$.  Thus $p_K$ is a group theoretic retraction, so that we may write $\pi_1(\Sigma(K, x)) \leq \pi_1(\Sigma(X, x))$.  Also, if $x\in K_1\subseteq K_2 \subseteq X$ we have $p_{K_1}\circ p_{K_2} = p_{K_1}$.  

Let $\K_X$ be the set of all countable compact subsets of $X$ which contain $x$ and whose set of limit points is included in $\{x\}$.  Alternatively $K\in \K_X$ if and only if $x\in K$ and for any neighborhood $U$ of $x$ the set $K\setminus U$ is finite.  The set $\K_X$ is a directed set under set inclusion and upper bounds of finite subsets of $\K_X$ are given by unions.

\begin{lemma}\label{unionofsubgroups}  $\pi_1(\Sigma(X, x)) = \bigcup_{K\in \K_X} \pi_1(\Sigma(K, x))$.
\end{lemma}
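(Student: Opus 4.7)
The inclusion $\bigcup_{K\in \K_X} \pi_1(\Sigma(K, x)) \subseteq \pi_1(\Sigma(X, x))$ is immediate from the retraction discussion preceding the statement: each $p_K$ is a group-theoretic retraction, making $\pi_1(\Sigma(K, x))$ a subgroup of $\pi_1(\Sigma(X, x))$ via the inclusion-induced map, and these subgroups union inside the ambient group.

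For the reverse inclusion, the plan is to start with $[\gamma] \in \pi_1(\Sigma(X, x))$, use Lemma \ref{loop-red} to replace $\gamma$ by a reduced representative, and then set $K := \chi(\gamma(I)) \cup \{x\} \subseteq X$. Then $\gamma(I) \subseteq \Sigma(K, x)$ by construction, so it suffices to verify $K \in \K_X$, after which $[\gamma]$ lies in $\pi_1(\Sigma(K,x))$ as required.

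To see $K$ is countable, I would note that $I \setminus \gamma^{-1}(x)$ decomposes into at most countably many open-interval components $C$, each mapped by $\gamma$ into $\mathring{X} = \Sigma(X, x) \setminus \{x\}$. On $\mathring{X}$ the map $\chi$ agrees with projection onto the $X$-factor and is therefore continuous, so $\chi\circ\gamma|_C$ is a continuous path in $X$; since $X$ is totally path disconnected, this path is constant with some value $p_C \in X \setminus \{x\}$. Hence $K = \{x\} \cup \{p_C\}_C$ is countable.

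The main step is showing that $x$ is the only possible limit point of $K$; once established, this at once gives $K \setminus U$ finite for every neighborhood $U$ of $x$, hence compactness of $K$ (any open cover contains a member covering $x$, which covers all but finitely many points), and thus $K \in \K_X$. Given any sequence $(p_{C_n})_n$ of pairwise distinct points in $K\setminus\{x\}$, reducedness of $\gamma$ supplies $t_n \in C_n$ with $\gamma(t_n) \in \check{X}$, and the constancy observation forces $\gamma(t_n) = (p_{C_n}, 0)$. Compactness of $I$ produces a subsequence $t_{n_k}\to t^*$, continuity gives $\gamma(t_{n_k}) \to \gamma(t^*)$, closedness of $\check{X}$ in $\Sigma(X, x)$ places $\gamma(t^*) \in \check{X}$, and Lemma \ref{onlycompact} then forces $\gamma(t^*) = x$ (since the $\gamma(t_{n_k})$ are distinct, $\gamma(t^*)$ is a genuine limit point of $\gamma(I) \cap \check{X}$). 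Transporting along the homeomorphism $X \cong \check{X}$ that sends $p$ to $(p,0)$ yields $p_{C_{n_k}} \to x$ in $X$. The obstacle is precisely this limit-point step, which is where reducedness, closedness of $\check{X}$, Lemma \ref{onlycompact}, and the identification $X \cong \check{X}$ must all be brought to bear simultaneously.
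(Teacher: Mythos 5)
Your proof is correct and follows essentially the same route as the paper's (much terser) argument: reduce the loop via Lemma \ref{loop-red}, take $K$ to be the set of $X$-coordinates met by the loop together with $x$, and use Lemma \ref{onlycompact} plus reducedness to conclude $K\in\K_X$. The paper leaves the verification that $K\in\K_X$ implicit; your sequence argument (which, note, directly yields that $K\setminus U$ is finite for every neighborhood $U$ of $x$, which is what the definition of $\K_X$ actually requires) supplies exactly the missing details.
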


\begin{proof}  That the expression even makes sense and that the inclusion $\supseteq$ holds follows from the discussion above.  Any based loop $\gamma$ in $\Sigma(X, x)$ is homotopic to a reduced $\gamma_0$ by Lemma \ref{loop-red}.  The image $\gamma_0(I)$ is a Peano continuum and $\gamma_0(I) \cap \check X$ is compact metrizable.  Given a neighborhood $U \subseteq \check X$ of $x$ the set $(\gamma_0(I) \cap \check X ) \setminus U$ must be finite, for otherwise we could find (by compact metrizability) a limit point in $(\gamma_0(I) \cap \check X ) \setminus U$, contrary to Lemma \ref{onlycompact}.  Since $X$ is first countable at $x$ we therefore see that $\gamma_0(I) \cap \check X$ is countable (and again, compact).  Then $$[\gamma] = [\gamma_0] \in \pi_1(\Sigma(\chi(\gamma_0(I) \cap \check{X}), x)) \subseteq \bigcup_{K\in \K_X} \pi_1(\Sigma(K, x)).$$
\end{proof}

\begin{lemma}  If $x$ is not a limit point of $X$ then $\pi_1(\Sigma(X, x))$ is a free group of rank $\card(X) -1$.
\end{lemma}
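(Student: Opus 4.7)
The plan is to combine the hypothesis on $x$ with Lemma \ref{unionofsubgroups} and reduce the computation to a directed union of finite wedges of circles.

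First I would observe that since $x$ is not a limit point of $X$, it is not a limit point of any subset of $X$; in particular, no $K \in \K_X$ has $x$ as a limit point. Coupled with the defining requirement that the limit points of $K$ lie in $\{x\}$, this means every $K \in \K_X$ has no limit points at all. A compact Hausdorff space without limit points is discrete and compact, hence finite. Conversely, every finite subset of $X$ containing $x$ is a (trivially countable) compact set with no limit points. Thus $\K_X$ coincides with the collection of finite subsets of $X$ containing $x$.

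Next, for such a finite $K = \{x, y_1, \dots, y_n\}$, the subspace topology makes $K$ discrete, and $\Sigma(K, x)$ is canonically homeomorphic to a finite CW-complex consisting of a wedge of $n$ circles, one circle $C_y$ for each $y \in K \setminus \{x\}$. Hence $\pi_1(\Sigma(K, x))$ is free of rank $n = \card(K) - 1$, with a natural free generator $\alpha_y$ represented by the loop going once around $C_y$. Moreover, whenever $K_1 \subseteq K_2$ are two such finite sets, the retraction $p_{K_1}$ and inclusion-induced map $i_\ast$ discussed just before $\K_X$ was defined show that the embedding $\pi_1(\Sigma(K_1,x)) \hookrightarrow \pi_1(\Sigma(K_2,x))$ sends $\alpha_y \mapsto \alpha_y$ for each $y \in K_1 \setminus \{x\}$; that is, it is precisely the inclusion of free groups induced by $K_1 \setminus \{x\} \subseteq K_2 \setminus \{x\}$.

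Finally, by Lemma \ref{unionofsubgroups}, $\pi_1(\Sigma(X,x)) = \bigcup_{K \in \K_X} \pi_1(\Sigma(K, x))$, and the directed system above identifies this union with the directed colimit of the family $\{F(K \setminus \{x\})\}$ of free groups along the obvious inclusions. That colimit is the free group on the set $X \setminus \{x\}$, which has rank $\card(X) - 1$.

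The only step requiring thought is the first one — reading off from the hypothesis that $\K_X$ collapses to finite sets — and this is immediate. The rest is bookkeeping with Lemma \ref{unionofsubgroups} and the structure of a wedge of finitely many circles, so no serious obstacle is anticipated.
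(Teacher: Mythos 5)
Your proof is correct and follows essentially the same route as the paper's: both reduce to the observation that $\K_X$ consists exactly of the finite subsets containing $x$, identify each $\Sigma(K,x)$ with a finite wedge of circles, and then invoke Lemma \ref{unionofsubgroups} together with the compatibility of the inclusion-induced injections. The only difference is presentational — the paper exhibits an explicit homomorphism $\phi: F(X\setminus\{x\})\rightarrow \pi_1(\Sigma(X,x))$ and checks surjectivity and injectivity directly, while you phrase the same argument as a directed colimit of free groups.
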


\begin{proof}  There exists an obvious homomorphism $\phi: F(X \setminus \{x\}) \rightarrow \pi_1(\Sigma(X, x))$ which takes each generator $x'\in X\setminus \{x\}$ to a loop that traverses a generator of $\pi_1(\Sigma(\{x, x'\}, x))$.  By Lemma \ref{unionofsubgroups} this map is necessarily onto, since each $K \in \K_X$ is finite and $\Sigma(K, x)$ is a finite wedge of $\card(K) -1$ many circles whose fundamental group is mapped onto isomorphically from the free subgroup $F(K\setminus \{x\}) \leq F(X \setminus \{x\})$ via the restriction of $\phi$.  In fact the kernel of $\phi$ is trivial since the restriction of $\phi$ to any subgroup $F(K \setminus \{x\})$ is an injection for $K\in \K_X$.  
\end{proof}

Let $X = U_0 \supseteq U_1 \supseteq \cdots$ be a basis of neighborhoods of $x$.  Define a sequence of cardinal numbers $(\lambda_n)_n$ by letting $\lambda_n = \card(U_n \setminus U_{n+1})$.  We now give another characterization of $\pi_1(\Sigma(X, x))$.

\begin{lemma}\label{characterizationastopprod}  $\pi_1(\Sigma(X, x)) \simeq \topprod_n F(\lambda_n)$.
\end{lemma}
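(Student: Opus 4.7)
The plan is to build a realization homomorphism $\Re \colon \topprod_n F(\lambda_n) \to \pi_1(\Sigma(X, x))$ mirroring the Hawaiian-earring construction of Section \ref{Prelim}, and then show it is bijective by comparing it with the realization isomorphism for each subspace $\Sigma(K, x)$ with $K \in \K_X$. Enumerate $U_n \setminus U_{n+1} = \{x_{n, \alpha} : \alpha \in \lambda_n\}$ so these enumerations index the free generators of $F(\lambda_n)$, and to each generator $a_{n, \alpha}$ attach the loop $\gamma_{n, \alpha}(t) := (x_{n, \alpha}, 2t - 1)$, whose image lies in $\Sigma(U_n, x)$. The tube lemma together with the neighborhood basis $(U_n)_n$ shows that the subspaces $\Sigma(U_n, x)$ form a neighborhood basis of $x$ in $\Sigma(X, x)$, so the shrinking condition from Section \ref{Prelim} is met. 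Define $\Re$ by expressing each letter of a word as a finite product of generators $a_{n, \alpha}^{\pm 1}$ and then performing the Cantor-ternary realization; the standard checks give a well-defined group homomorphism.

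For surjectivity, take $[\gamma] \in \pi_1(\Sigma(X, x))$ and replace $\gamma$ by a reduced representative via Lemma \ref{loop-red}. Lemma \ref{onlycompact} together with first countability guarantees that $K_\gamma := \chi(\gamma(I) \cap \check X) \cup \{x\}$ is a countable compact subset of $X$ whose only limit point is $x$, so $K_\gamma \in \K_X$. Any such $K_\gamma$ is homeomorphic to $\omega + 1$, so $\Sigma(K_\gamma, x)$ is a shrinking wedge of circles whose fundamental group is realized by the Hawaiian-earring map $\Re_{K_\gamma} \colon \topprod_n F(\kappa_n^\gamma) \to \pi_1(\Sigma(K_\gamma, x))$, where $\kappa_n^\gamma = \card(K_\gamma \cap (U_n \setminus U_{n+1})) < \aleph_0$. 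The natural inclusion $\topprod_n F(\kappa_n^\gamma) \hookrightarrow \topprod_n F(\lambda_n)$ intertwines $\Re_{K_\gamma}$ with $\Re$ followed by the inclusion $\pi_1(\Sigma(K_\gamma, x)) \hookrightarrow \pi_1(\Sigma(X, x))$, so $[\gamma]$ lies in the image of $\Re$.

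For injectivity, suppose $\Re(W) = 1$. The set $K_W := \{x_{n, \alpha} : a_{n, \alpha} \text{ appears in the letters of some } W(i)\} \cup \{x\}$ meets each $U_n \setminus U_{n+1}$ in a finite set, since $W$ contains only finitely many letters in $F(\lambda_n)$ and each such letter is a finite product of generators. Thus $K_W \in \K_X$, and by construction $\Re(W)$ is the image of a class in $\pi_1(\Sigma(K_W, x))$. The retraction $p_{K_W}$ makes the inclusion $\pi_1(\Sigma(K_W, x)) \hookrightarrow \pi_1(\Sigma(X, x))$ split, so $\Re(W) = 1$ already in $\pi_1(\Sigma(K_W, x))$; since $\Re_{K_W}$ is an isomorphism, $W$ is trivial in $\topprod_n F(\kappa_n^{K_W})$ and hence in $\topprod_n F(\lambda_n)$.

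The main obstacle is ensuring compatibility between $\Re$ and $\Re_K$ as $K$ varies in $\K_X$: one needs $\Sigma(K, x)$ to carry its intended subspace topology so it is genuinely a Hawaiian-earring-type space (relying on $K \cong \omega + 1$), and one needs the Cantor-ternary recipe for words over $\sqcup_n \kappa_n^K$ to produce, up to reparameterization, the same loops as the recipe over $\sqcup_n \lambda_n$ applied to the corresponding word. Both are routine once the naturality of the realization is set up carefully, but they are where the bookkeeping is heaviest.
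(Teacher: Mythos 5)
Your proposal is correct and follows essentially the same route as the paper: both arguments come down to the directed family $\K_X$, the isomorphism $\pi_1(\Sigma(K,x))\simeq\topprod_n F(K_n)$ for each $K\in\K_X$ from Section \ref{Prelim}, and compatibility as $K$ grows (via Lemmas \ref{loop-red}, \ref{onlycompact} and \ref{unionofsubgroups}); you merely construct the map in the direction $\topprod_n F(\lambda_n)\to\pi_1(\Sigma(X,x))$, whereas the paper assembles its inverse $\phi$ from the maps $\phi_K$. One inaccuracy to repair: the sets $\Sigma(U_n,x)$ are \emph{not} neighborhoods of $x$ in $\Sigma(X,x)$ (a neighborhood must also contain a collar $X\times([-1,-1+\epsilon)\cup(1-\epsilon,1])$), but the tube-lemma argument you invoke does give the shrinking condition actually needed, namely that every neighborhood of $x$ contains the entire circle over every point of $U_N$ for $N$ sufficiently large.
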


\begin{proof}  Since $F(\lambda_n) \simeq F(U_n\setminus U_{n+1})$ we are really showing that $\pi_1(\Sigma(X, x)) \simeq \topprod_n F(U_n \setminus U_{n+1})$.  We use the characterization in Lemma \ref{unionofsubgroups} to define a homomorphism, and prove that this homomorphism is an isomorphism.

For each $K \in \K_X$ let $K_n = K \cap (U_n \setminus (U_{n+1}\setminus \{x\}))$ (each $K_n$ is finite).  The space $\Sigma(K, x)$ is either a finite wedge of circles or a homeomorph of the Hawaiian earring.  For each $x'\in X\setminus\{x\}$ let $\gamma_{x'}$ be a loop that traverses the circle $\Sigma(\{x, x'\}, x)$  exactly once by passing through the negative coordinates first and then the positives.  By the discussion in Section \ref{Prelim} we have an induced isomorphism $\phi_K: \pi_1(\Sigma(K, x)) \rightarrow \topprod_n F(K_n)$.    Moreover for $K, K'\in \K_X$ the map $\phi_{K\cup K'}: \pi_1(K\cup K', x) \rightarrow \topprod_n F(K_n \cup (K')_n)$ is an extension of $\phi_K$ and $\phi_{K'}$.  Thus there exists a homomorphism $\phi: \pi_1(\Sigma(X, x)) \rightarrow \topprod_n F(U_n \setminus U_{n+1})$, where the codomain we have written contains the codomain of each $\phi_K$.  As each $\phi_K$ is injective we know $\phi$ is injective.

For surjectivity of $\phi$, let $[W]\in \topprod_n F(U_n \setminus U_{n+1})$ be given.  Pick a representative $W \in [W]$.  For each $n \in \mathbb{N}$ the set $W^{-1} (F(U_n\setminus U_{n+1}))$ is finite.  Then for each $n\in \mathbb{N}$ there exists a finite set $K_n \subseteq U_n \setminus U_{n+1}$ such that each letter in $F(U_n\setminus U_{n+1})$ used in the word $W$ is actually in $F(K_n)$.  Letting $K = \bigcup_n K_n$ we see that our choice of notation $K_n$ matches the $K_n$ for $K$ as defined in the previous paragraph.  Moreover, $W$ is a representative of an element in $\topprod_n F(K_n)$, and  $\phi_K$ is onto $\topprod_n F(K_n)$, so $[W]$ is in the image of $\phi$.
\end{proof}

We state two results: the first is due to G. Higman \cite{Hi}, the second was proved by K. Eda \cite{E2}.

\begin{theorem}\label{Higman}  If $\phi : \topprod_n \mathbb{Z} \rightarrow F$ is a homomorphism with $F$ a free group, then for some $N \in \omega$ we have $\phi \circ p_N = \phi$ where $p_N$ is the retraction to $\ast_{n<N}\mathbb{Z}$.
\end{theorem}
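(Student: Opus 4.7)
The plan is to argue by contradiction. Suppose $\phi \circ p_N \neq \phi$ for every $N \in \omega$. Using the canonical decomposition $\topprod_n \mathbb{Z} \simeq \bigl(\ast_{n \leq N}\mathbb{Z}\bigr) \ast \topprod_{n > N}\mathbb{Z}$ recalled in Section~\ref{Prelim}, this is precisely to say that the restriction $\phi \upharpoonright \topprod_{n > N}\mathbb{Z}$ is nontrivial for every $N$; so for each $N$ I can pick $u \in \topprod_{n > N}\mathbb{Z}$ with $\phi(u) \neq 1_F$.

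Iterating this choice, and post-composing with the appropriate retractions $p^{N'}$ for successively larger $N'$ in order to control supports, I construct strictly increasing indices $0 = N_0 < N_1 < N_2 < \cdots$ and elements $w_k \in \topprod_n \mathbb{Z}$ whose word representatives are supported in $\{n : N_k < n \leq N_{k+1}\}$ and satisfy $\phi(w_k) \neq 1_F$. Because the supports of the $w_k$ are pairwise disjoint and only finitely many meet any single coordinate, every formal product $\prod_{k \in S} w_k$ indexed by a subset $S \subseteq \omega$ (taken in the natural order) defines a valid element of $\topprod_n \mathbb{Z}$. In particular each tail $T_m := w_m w_{m+1} w_{m+2} \cdots$ is defined, and telescoping gives $\phi(T_m) = \phi(w_m)\phi(T_{m+1})$ for every $m$, so the sequence $(\phi(T_m))_m$ witnesses an ``infinite product'' of nontrivial elements inside $F$.

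The main obstacle—and the technical heart of Higman's original argument \cite{Hi}—is to turn this infinite product into a contradiction inside the target free group. Free groups are famously hostile to such configurations: every element has finite reduced length in any chosen free basis, centralizers are cyclic, and no nontrivial element is infinitely divisible. The classical way to exploit this is to refine the construction of the $w_k$, for instance by replacing each witness by a commutator or a conjugate of it (which still lies in $\topprod_{n > N_k}\mathbb{Z}$ and still has nontrivial $\phi$-image), so that the images $\phi(T_m)$ are forced to simultaneously satisfy incompatible length, commutation, and cancellation constraints with respect to a fixed free basis of $F$. Carrying out this combinatorial word analysis yields the required contradiction and forces $\phi \circ p_N = \phi$ for some $N$.
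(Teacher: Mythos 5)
The paper does not actually prove this statement: it is quoted as a known theorem of Higman with a citation to \cite{Hi}, so there is no internal proof to compare against, and your attempt must be judged on its own terms. On those terms it has a genuine gap, located exactly where the theorem's content lives. Your setup is correct: since $\ker p_N$ is the normal closure of $\topprod_{n>N}\mathbb{Z}$ in the decomposition $\topprod_n\mathbb{Z}\simeq(\ast_{n\le N}\mathbb{Z})\ast(\topprod_{n>N}\mathbb{Z})$, the failure of $\phi\circ p_N=\phi$ for every $N$ does produce elements $w_k$ with pairwise disjoint, escaping supports and $\phi(w_k)\neq 1$, the tails $T_m=w_mw_{m+1}\cdots$ are legitimate elements, and $\phi(T_m)=\phi(w_m)\phi(T_{m+1})$ holds. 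But that configuration is not contradictory in a free group --- or in any group at all: in $F=\langle a\rangle$ the relations $t_m=f_mt_{m+1}$ are satisfied by $f_m=a$ and $t_m=a^{-m}$ with every $f_m$ nontrivial. So the ``infinite product of nontrivial elements'' you exhibit carries no contradiction by itself.

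Your third paragraph concedes this and then defers the actual argument to an unspecified ``combinatorial word analysis,'' i.e., to Higman's proof itself. What is missing is the specific recursive choice of witnesses --- roughly, one must define the $w_k$ backwards, each as a fixed word pattern in a generator $a_{n_k}$ and the next tail (a conjugation or commutator scheme), so that the identities $\phi(T_m)=\phi(w_m)\phi(T_{m+1})$ propagate into an impossible constraint on reduced lengths (or divisibility) of the $\phi(T_m)$ in a fixed basis of $F$. Merely saying that free groups have finite reduced lengths, cyclic centralizers, and no divisible elements does not identify which of these is violated or how. As written, the proposal is a correct reduction plus a pointer to the literature, not a proof; since the paper itself treats the result as a black box from \cite{Hi}, the honest courses of action are either to do likewise or to actually carry out the recursive construction.
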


\begin{theorem}\label{Chaselemma}  Let $(G_n)_n$ and $H_j$ ($j\in J$) be groups and $\phi :\topprod_n G_n \rightarrow \ast_{j\in J} H_j$ be a homomorphism to the free product of the groups $H_j$.  Then there exist $N\in \omega$ and $j\in J$ such that $\phi(\topprod_{n\geq N} G_n)$ is contained in a subgroup which is conjugate to $H_j$.
\end{theorem}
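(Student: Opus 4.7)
The plan is to work via the Bass-Serre tree $T$ of the free product $\ast_{j\in J}H_j$. The natural action of $\ast_{j}H_j$ on $T$ has vertex stabilizers that are precisely the conjugates of the factors $H_j$, and edge stabilizers that are trivial. Hence a subgroup of $\ast_{j}H_j$ is contained in a conjugate of some single $H_j$ if and only if it globally fixes a vertex of $T$, and Theorem~\ref{Chaselemma} is equivalent to producing some $N$ for which $\phi(\topprod_{n\geq N}G_n)$ has a common fixed vertex. I would aim for this reformulation.

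I would argue by contradiction: suppose for every $N$ the image $\phi(\topprod_{n\geq N}G_n)$ has no global fixed vertex in $T$. The goal is then to recursively construct a strictly increasing sequence $0=N_0<N_1<N_2<\cdots$ together with elements $w_k\in \topprod_{N_k\leq n<N_{k+1}}G_n$ whose images form a tree-theoretic ping-pong configuration. Concretely, fixing a base vertex $v_0$, I would arrange that the geodesic in $T$ from $v_0$ to $\phi(w_0w_1\cdots w_m)v_0$ extends monotonically (without backtracking) as $m$ grows, with each step contributing at least one extra edge. The freedom to make such choices at each stage comes from the hypothesis that the current tail $\phi(\topprod_{n\geq N_k}G_n)$ does not fix a vertex; combined with the elementary classification of elements of a free product as elliptic or hyperbolic on $T$, this provides elements moving prescribed vertices in prescribed directions.

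Having constructed the $w_k$, assemble them into the infinite product $W=w_0w_1w_2\cdots\in \topprod_n G_n$, which is well-defined because the blocks of indices $[N_k,N_{k+1})$ are pairwise disjoint. Then $\phi(W)$ is a single element of $\ast_{j}H_j$ with some finite syllable length $\ell$, hence a fixed finite displacement $D=d_T(v_0,\phi(W)v_0)$. On the other hand, the ping-pong construction forces $d_T(v_0,\phi(w_0\cdots w_m)v_0)\geq m$. Writing $\phi(W)=\phi(w_0\cdots w_m)\cdot\phi(w_{m+1}w_{m+2}\cdots)$, this forces $\phi(w_{m+1}w_{m+2}\cdots)$ to move $v_0$ by at least $m-D$ along the coherent ray produced by the construction. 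By arranging the ping-pong so that these accumulated displacements align along a common ray in $T$, and by iterating the construction on the remaining tails, I would extract an incompatibility between the finite syllable length of $\phi(W)$ and the unboundedly growing coherent displacement, producing the sought contradiction.

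The main technical obstacle, which is the heart of Eda's argument, is orchestrating this ping-pong step: at each recursion I must select $w_k$ whose $\phi$-image moves a specified vertex in a specified direction, and guarantee the new contribution does not backtrack over the accumulated geodesic. This requires a delicate geometric analysis of how a subgroup of $\ast_{j}H_j$ with no fixed vertex acts on $T$ — distinguishing elliptic and hyperbolic elements, analyzing their fixed-point sets and axes, and combining elliptic elements with non-concurrent fixed vertices to produce hyperbolic elements with prescribed axes — all carried out using only the limited structural information (non-vertex-fixing) available for each tail.
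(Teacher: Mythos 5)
The paper does not actually prove this statement: it is quoted as a theorem of Eda \cite{E2}, so there is no internal proof to compare against, and your sketch has to stand on its own. Its starting point is fine --- in the Bass--Serre tree of $\ast_{j\in J}H_j$ edge stabilizers are trivial, nontrivial elliptic elements fix a unique vertex, and a subgroup lies in a conjugate of some $H_j$ if and only if it fixes a vertex --- but the two places where you defer the work are exactly where the content of the theorem lives, and as written the argument does not close.

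First, your elements $w_k$ must be supported on finite blocks $\ast_{N_k\le n<N_{k+1}}G_n$, since otherwise $W=w_0w_1w_2\cdots$ is not a legal word. But the contradiction hypothesis only says that each full tail $\phi(\topprod_{n\geq N}G_n)$ fixes no vertex, and $\topprod_{n\geq N}G_n$ is \emph{not} generated by its finitely supported elements. It is entirely consistent with your hypothesis that every finite sub-product $\phi(\ast_{N\le n\le M}G_n)$ lies in a single vertex stabilizer while the full tail does not; in that case the only witnesses to non-fixing are images of infinitely supported words, your blocks $w_k$ cannot be chosen at all, and the ping-pong never starts. Handling this case forces one to analyze how $\phi$ treats genuinely infinite words, which the sketch never does. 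Second, even when finite-block witnesses exist, the endgame is not a contradiction: from $d(v_0,\phi(W)v_0)=D$ and $d(v_0,\phi(w_0\cdots w_m)v_0)\ge m$ you conclude only that $\phi(w_{m+1}w_{m+2}\cdots)$ displaces $v_0$ by at least $m-D$ --- but this is a \emph{different} element for each $m$, and nothing bounds the displacement of elements of a tail subgroup. To get a contradiction you must control the cancellation in $\phi(W)=\phi(w_0\cdots w_m)\,\phi(w_{m+1}w_{m+2}\cdots)$, i.e.\ show that the normal form of $\phi(W)$ retains a long prefix of that of $\phi(w_0\cdots w_m)$. Since $\phi$ is an arbitrary homomorphism with no continuity with respect to infinite concatenation, this can only be forced by an adaptive choice of the $w_k$ (Higman's ``at most one exponent causes cancellation'' device, or equivalently a pigeonhole over the uncountably many candidate infinite words). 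That diagonal argument is the engine behind Theorem \ref{Higman} and behind Eda's proof, and it is precisely the piece your proposal labels ``orchestrating the ping-pong'' without supplying.
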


From Theorem \ref{Higman} we obtain the following:

\begin{lemma}\label{augmentedHigman}  If $\phi: \topprod_n F(\kappa_n) \rightarrow F$ is a homomorphism with $F$ a free group then for some $N$ we have $\phi\circ p_N = \phi$.
\end{lemma}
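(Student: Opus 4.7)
The plan is to deduce Lemma \ref{augmentedHigman} from Theorem \ref{Higman} in two steps: first, that $\phi|_{F(\kappa_n)}$ is trivial for all but finitely many $n$, and second, that this forces $\phi$ to vanish on an entire tail topologist product.

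For the first step, suppose for contradiction that $\phi|_{F(\kappa_n)}$ is nontrivial for infinitely many $n$. Picking indices $n_0 < n_1 < \cdots$ and elements $a_k \in F(\kappa_{n_k})$ with $\phi(a_k) \neq 1$, one obtains a well-defined homomorphism $\eta : \topprod_k \mathbb{Z} \to \topprod_n F(\kappa_n)$ that sends the $k$-th generator to the single letter $a_k \in F(\kappa_{n_k}) \subseteq \sqcup_n F(\kappa_n)$ (the countable-domain, finite-per-slot, and $\sim$-compatibility conditions on words are all straightforward to verify, since the $n_k$ are distinct). Theorem \ref{Higman} applied to $\phi \circ \eta : \topprod_k \mathbb{Z} \to F$ then gives an $M$ with $\phi(a_k) = 1$ for $k \geq M$, contradicting our choice. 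Hence there is some $N_1$ with $\phi|_{F(\kappa_n)} = 1$ for every $n > N_1$.

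For the second step, take $w \in \topprod_{n > N_1} F(\kappa_n)$ and pick a word representative. For each $n > N_1$, let $B_n^w$ be the finite set of elements of a fixed free basis $B_n$ of $F(\kappa_n)$ that appear in the expansion of the letters of $w$ lying in $F(\kappa_n)$, and set $r_n := |B_n^w|$. Then $w$ belongs to the subgroup $G := \topprod_{n > N_1} F(B_n^w) \leq \topprod_{n > N_1} F(\kappa_n)$. If $(r_n)_n$ is eventually zero then $G$ is a finite free product of subgroups of various $F(\kappa_n)$ with $n > N_1$, on each of which $\phi$ vanishes by the first step. Otherwise, the isomorphism from Section \ref{Prelim} yields $\alpha : \topprod_m \mathbb{Z} \to G$ sending each generator to a single element of some $B_{n(m)}^w \subseteq F(\kappa_{n(m)})$ with $n(m) > N_1$; Theorem \ref{Higman} applied to $\phi \circ \alpha : \topprod_m \mathbb{Z} \to F$ produces $M$ so that $\phi \circ \alpha$ vanishes on $\topprod_{m \geq M} \mathbb{Z}$, while the image of the complementary factor $\ast_{m < M} \mathbb{Z}$ under $\alpha$ lies in $\ast_{n \in S'} F(\kappa_n)$ for a finite $S' \subseteq (N_1, \infty)$, where once again $\phi$ vanishes. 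Hence $\phi(w) = 1$, and combining both steps yields $\phi = \phi \circ p_{N_1}$.

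The main obstacle is the second step: the fact that $\phi$ kills each $F(\kappa_n)$ individually for $n > N_1$ does not formally imply it kills $\topprod_{n > N_1} F(\kappa_n)$, since the latter contains genuinely infinite concatenations of letters. The resolution is to restrict attention to the countable, finite-rank-per-slot subgroup $G$ extracted from a chosen word representative of $w$, and exploit the preliminaries' isomorphism $\topprod_m \mathbb{Z} \simeq \topprod_n F(r_n)$ in order to invoke Higman's theorem a second time.
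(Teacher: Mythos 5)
Your argument is correct, and it rests on the same two ingredients as the paper's proof: Theorem \ref{Higman} and the observation that everything relevant can be confined to a sub-product $\topprod_n F(r_n)$ with each $r_n$ finite, which is isomorphic to $\topprod_n \mathbb{Z}$. But you organize the reduction differently. The paper argues by contradiction in a single stroke: if no $N$ works, it picks for each $N$ a word $W_N$ with $[W_N]\in \topprod_{n>N}F(\kappa_n)$ and $\phi([W_N])\neq 1$, gathers \emph{all} of these into one countable sub-product $\topprod_n F(r_n)$ (each $r_n$ is finite because only the finitely many words $W_N$ with $N<n$ can touch $F(\kappa_n)$), and then one application of Theorem \ref{Higman} to $\phi\upharpoonright \topprod_n F(r_n)$ kills $[W_N]$ for large $N$ — the fact that the witnesses were chosen to escape into deeper and deeper tails makes the contradiction immediate. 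You instead run a two-stage direct argument: first killing each individual factor $F(\kappa_n)$ for $n>N_1$ via the diagonal embedding $\eta$, then handling an arbitrary element $w$ of the tail product with a second application of Higman to the sub-product carved out by a representative word of $w$. Both are sound; the paper's version is shorter because it needs Higman only once. The two points you leave implicit — the well-definedness of the induced homomorphism $\eta$ on words, and the final passage from ``$\phi$ vanishes on $\topprod_{n>N_1}F(\kappa_n)$'' to ``$\phi\circ p_{N_1}=\phi$'' via the decomposition $\topprod_n F(\kappa_n)\simeq (\free_{n\leq N_1}F(\kappa_n))\free(\topprod_{n>N_1}F(\kappa_n))$ — are at the level of detail the paper itself leaves to the reader.
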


\begin{proof}  Suppose for contradiction that for $\phi: \topprod_n F(\kappa_n) \rightarrow F$ we have no $N\in \mathbb{N}$ such that $\phi\circ p_N = \phi$.  Then for each $N\in \mathbb{N}$ we have a word $W_N$ such that $[W_N] \in \topprod_{n>N} F(\kappa_n)$ and $\phi([W_N]) \neq 1$.  Then we can select finite sets $r_n \subseteq \kappa_n$ such that for all $N\in \mathbb{N}$ we have $[W_N]\in \topprod_n F(r_n) \leq \topprod_n F(\kappa_n)$.  Now $\phi\upharpoonright\topprod_n F(r_n)$ violates Theorem \ref{Higman} since $\topprod_n F(r_n) \simeq \topprod_n \mathbb{Z}$ as discussed in Section \ref{Prelim}.
\end{proof}

Let now $Y = V_0 \supseteq V_1 \supseteq \cdots$ be a basis of neighborhoods of $y$ and define the sequence $(\mu_n)_n$ of cardinal numbers by $\mu_n = \card(V_n \setminus V_{n+1})$.  For the forward direction of Theorem \ref{tpd-sbc} we assume that $\pi_1(\Sigma(X, x)) \simeq \pi_1(\Sigma(Y, y))$.  By Lemma \ref{characterizationastopprod} we have an isomorphism $\psi: \topprod_n F(\lambda_n) \simeq \topprod_n F(\mu_n)$.  If we can show that $(\lambda_n)_n \approx (\mu_n)_n$ we will be done, for a witness $f':\sqcup_{n = 0}^{\infty} \lambda_n \rightarrow \sqcup_{n=0}^{\infty}\mu_n$ induces a map $f:\sqcup_{n = 0}^{\infty} (U_n\setminus U_{n+1}) \rightarrow \sqcup_{n=0}^{\infty}(V_n\setminus V_{n+1})$ and by extending $f$ to have $f(x) = y$ we obtain a bijective map $f$ with $f$ continuous at $x$ and $f^{-1}$ continuous at $y$.

We check conditions (1)-(3) of Lemma \ref{equivalence}.  To avoid confusion we let $p_{N, X}$ and $p_X^N$ be the distinguished retractions on $\topprod_n F(\lambda_n)$ for each $N\in \mathbb{N}$ and similarly for $p_{N, Y}$ and $p_Y^N$ on $\topprod_n F(\mu_n)$.

To see that (1) holds we suppose without loss of generality that $(\mu_n)_n$ is eventually $0$.  Then $\topprod_{n} F(\mu_n) \simeq \ast_{n\leq M} F(\mu_n) \simeq F(\sum_{n \leq M} \mu_n)$.  Then the image of $\psi$ is a free group, so by Lemma \ref{augmentedHigman} we have $N$ such that $\psi \circ p_{N, X} = \psi$.  But $\psi$ is injective, so $p_{N, X}$ has trivial kernel, so that in fact $\lambda_n = 0$ for $n>N$.

To see that (2) holds suppose that $n \geq M$ implies $\mu_n <\kappa$ with $\kappa$ an infinite cardinal.  As $\psi:\topprod_n F(\lambda_n)\rightarrow (\ast_{n\leq M} F(\mu_n)) \ast (\topprod_{n>M} F(\mu_n))$, we know by Theorem \ref{Chaselemma} that there is some $N$ such that $\psi(\topprod_{n>N}F(\lambda_n))$ is a subgroup of either a conjugate of $\ast_{n\leq M} F(\mu_n)$ or a conjugate of $\topprod_{n>M} F(\mu_n)$.  By composing the isomorphism $\psi$ by conjugation in the group $\topprod_n F(\mu_n)$ we may assume that either $\psi(\topprod_{n>N}F(\lambda_n)) \leq \ast_{n\leq M} F(\mu_n)$ or  $\psi(\topprod_{n>N}F(\lambda_n)) \leq \topprod_{n>M} F(\mu_n)$.  In case $\psi(\topprod_{n>N}F(\lambda_n)) \leq \ast_{n\leq M} F(\mu_n)$, we know since $\psi$ is an isomorphism that in fact $\topprod_{n>N}F(\lambda_n)$ is a free group, so applying Lemma \ref{augmentedHigman} gives us an $M'>N$ such that $\lambda_n = 0<\kappa$ for all $n>M'$.  In case $\psi(\topprod_{n>N}F(\lambda_n)) \leq \topprod_{n>M} F(\mu_n)$ we have that the homomorphism $p_X^N \circ \psi^{-1}\upharpoonright\topprod_{n>M} F(\mu_n)$  is a surjection from $\topprod_{n>M} F(\mu_n)$  to $\topprod_{n>N}F(\lambda_n)$.  Then for each $N'>N$ the map $p_{N', X} \circ p_X^{N} \circ \psi^{-1}\upharpoonright\topprod_{n>M} F(\mu_n)$ is a surjection from $\topprod_{n>M} F(\mu_n)$ to the free group $\ast_{N' \geq n >N} F(\lambda_n)$, so by Lemma \ref{augmentedHigman} we have $M' >M$ such that $p_{N', X} \circ p_X^{N} \circ \psi^{-1}\circ p_{M'}:\topprod_{n>M} F(\mu_n) \rightarrow \ast_{N' \geq n >N} F(\lambda_n)$ satisfies  $p_{N', X} \circ p_X^{N} \circ \psi^{-1}\circ p_{M'}\upharpoonright\topprod_{n>M} F(\mu_n) = p_{N', X} \circ p_X^{N} \circ \psi^{-1}\upharpoonright\topprod_{n>M} F(\mu_n)$.  As $\im(p_{M'}\upharpoonright\topprod_{n>M} F(\mu_n)) = \ast_{M' \geq n >M}F(\mu_n)$ and $p_{N', X} \circ p_X^{N} \circ \psi^{-1}\upharpoonright\topprod_{n>M} F(\mu_n)$ is onto $\ast_{N' \geq n >N} F(\lambda_n)$, we know that the rank of $\ast_{M' \geq n >M}F(\mu_n)$ is at least that of $\ast_{N' \geq n >N} F(\lambda_n)$.  Thus in particular we have $\lambda_{N'}<\kappa$ for every $N'>N$ and we are done in this case as well.

To see that condition (3) holds we let $M\in\mathbb{N}$ be given.  For the homomorphism $p_{M, Y}\circ\psi: \topprod_n F(\lambda_n) \rightarrow \ast_{n \leq M}F(\mu_n)$ there exists $M_0' \in \mathbb{N}$ such that $p_{M, Y}\circ\psi\circ p_{M_0', X} = p_{M, Y}\circ\psi$.  Since $p_{M, Y}\circ\psi$ is a surjection and $\im(p_{M_0', X}) = \ast_{n\leq M_0'} F(\lambda_n)$, it must be that the rank of $\ast_{n\leq M_0'} F(\lambda_n)$ is at least that of $\ast_{n \leq M}F(\mu_n)$, so that $\sum_{n=0}^{M_0'}\lambda_n \geq \sum_{n=0}^M \mu_n$.  By performing a similar argument on $p_{M, X}\circ \psi^{-1}:\topprod_n F(\mu_n) \rightarrow \ast_{n \leq M}F(\lambda_n)$ we obtain an $M_1'$ with $\sum_{n=0}^{M_1'}\mu_n \geq \sum_{n=0}^M \lambda_n$, and $M' = \max\{M_0', M_1'\}$ gives us (3).

A referee has kindly pointed out a classification which follows from the proof of Theorem \ref{tpd-sbc}.  Firstly notice that if $(\lambda_n)_n$ and $(\mu_n)_n$ are sequences of cardinals such that neither $\max\{\lambda_n\}_{n\in \omega}$ nor $\max\{\mu_n\}_{n\in \omega}$ exists and $\sup\{\lambda_n\}_{n\in \omega} = \sup\{\mu_n\}_{n\in \omega}$ then $\topprod_n F(\mu_n) \simeq \topprod_n F(\lambda_n)$ (one can use the bijection defined in Lemma \ref{equivalence} Subcase 2 to determine the isomorphism).  In the setting of the previous sentence we let $\lambda = \sup\{\lambda_n\}_{n\in \omega}$ and define $G_{\lambda} = \topprod_n F(\lambda_n)$, so $G_{\lambda}$ does not depend on the choice of sequence $(\lambda_n)_n$ as was noted.

The classification is the following.

\begin{theorem}\label{classification}  Suppose that $X$ is first countable at $x$ and that $X$ is totally path disconnected Hausdorff.  Then $\pi_1(\Sigma(X, x))$ is isomorphic to at least one of the following.

\begin{enumerate}

\item $F(\kappa)$ for some cardinal $\kappa$

\item $\topprod_n F(\kappa)$ with $\kappa$ infinite

\item $G_{\lambda}$ with $\lambda$ infinite of countable cofinality

\item $F(\kappa) \ast G_{\lambda}$ with $\lambda$ infinite of countable cofinality and $\kappa \geq \lambda$

\item $F(\kappa) \ast \topprod_n F(\lambda)$ with $\lambda$ infinite and $\lambda < \kappa$

\end{enumerate}

\end{theorem}

\end{section}

\begin{section}{The proof of Theorem \ref{thmOmega}}\label{Omega}

We remind the reader of the statement of Theorem \ref{thmOmega}:

\begin{B}  For a pointed Hausdorff space $(X,x)$ which is first countable at the distinguished point the following properties are equivalent:
\begin{enumerate}
\item
$(X, x)$ is a horseshoe space
\item
The fundamental group $\pi_1(HA)$ of the harmonic archipelago embeds into $\pi_1(\Sigma(X,x))$
\item
The group of rationals $\mathbb Q$ embeds into $\pi_1(\Sigma(X,x))$
\item
$\pi_1(\Sigma(X,x))$ contains an infinitely divisible element

\item
$X$ is not of tpd-type, i.e.~there is no totally path disconnected space $Y$ such that $\pi_1(\Sigma(X,x)) \simeq \pi_1(\Sigma(Y,y))$
\end{enumerate}

\end{B}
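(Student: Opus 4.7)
The plan is to establish the cycle $(1) \Rightarrow (2) \Rightarrow (3) \Rightarrow (4) \Rightarrow (5) \Rightarrow (1)$; three of the five steps are short while the other two call for real construction. The short steps go as follows. $(2) \Rightarrow (3)$ is immediate from the fact, recalled in the introduction, that $\mathbb{Q}$ embeds into $\pi_1(HA)$. $(3) \Rightarrow (4)$ holds because every nonzero rational is infinitely divisible in $\mathbb{Q}$ and divisibility is preserved by injective homomorphisms. For $(4) \Rightarrow (5)$ one argues contrapositively: if $X$ is of tpd-type, then by Theorem \ref{tpd-sbc} together with Lemma \ref{characterizationastopprod} we have $\pi_1(\Sigma(X,x)) \simeq \topprod_n F(\lambda_n)$ for some cardinal sequence. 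For any element $g$ of this topologist product and any $N$, the projection $p_N(g)$ lies inside the free product $\ast_{n \leq N} F(\lambda_n)$, itself a free group. Since free groups contain no nontrivial infinitely divisible elements and projections preserve divisibility, any infinitely divisible $g$ must satisfy $p_N(g) = 1$ for every $N$, forcing $g = 1$ by the definition of $\topprod$.

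For $(1) \Rightarrow (2)$, the plan is to convert the horseshoe data into a shrinking sequence of null-homotopic loops in $\Sigma(X,x)$ whose realization captures the relations of $\pi_1(HA)$. Given a horseshoe $U$, sequences $(x_n), (y_n) \to x$, and paths $\alpha_n: I \to X$ from $x_n$ to $y_n$ each forced to leave $U$, define a loop $\beta_n$ in $\Sigma(X,x)$ by ascending the suspension column at $x_n$ to a level $t_n$ close to the north pole, traversing $\alpha_n$ at level $t_n$, and descending the column at $y_n$ back to $x$. Choosing $t_n \to 1$ places each $\beta_n$ inside an arbitrarily small neighborhood of $x$ in $\Sigma(X,x)$, so arbitrary infinite concatenations yield well-defined continuous loops. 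Each $\beta_n$ is individually null-homotopic because it bounds the disk $I \times [t_n, 1] \to \Sigma(X,x)$ sending $(s,r) \mapsto (\alpha_n(s), r)$, whose top edge collapses to $x$ in the reduced suspension. Applying the realization machinery of Section \ref{Prelim} to $(\beta_n)_n$ then produces a homomorphism factoring through $\pi_1(HA)$ (since each generator is null-homotopic by construction) into $\pi_1(\Sigma(X,x))$; injectivity is established by showing that any null-homotopy of a nontrivial infinite concatenation would, via its trace on a suitable slice, yield a path in $U$ connecting some $x_n$ to $y_n$, contradicting the horseshoe property.

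The main obstacle is $(5) \Rightarrow (1)$, which I would handle by its contrapositive: assuming $(X,x)$ is not a horseshoe space, construct a first countable totally path disconnected Hausdorff space $(Y,y)$ with $\pi_1(\Sigma(X,x)) \simeq \pi_1(\Sigma(Y,y))$. The explicit proposal is to choose one representative from each path component of $X$, with $y = x$ selected as the representative of its own path component, and let $Y$ be the resulting subspace. Then $Y$ is Hausdorff and first countable at $y$, and $Y$ is totally path disconnected because any continuous path in $Y \subseteq X$ lies in a single path component of $X$ and meets $Y$ in only one point. The nontrivial task is matching the fundamental groups. The inclusion induces a homomorphism $\pi_1(\Sigma(Y,y)) \to \pi_1(\Sigma(X,x))$, and the plan is to build an inverse by rerouting, for each reduced loop $\gamma$ in $\Sigma(X,x)$, every excursion of $\gamma$ into $X$ through the chosen $Y$-representative of the corresponding path component. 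The failure of the horseshoe condition, strengthened by a subsequence extraction to the statement that any two sequences in a common path component converging to $x$ can eventually be joined by paths inside any prescribed neighborhood of $x$, is precisely what allows these reroutings to stay in shrinking neighborhoods of $x$ and thus yield continuous maps and well-defined homotopy classes; the checks that the construction descends to reduced homotopy classes and is inverse to the inclusion-induced map closely parallel the arguments in Section \ref{tpd}, using Lemmas \ref{loop-red} and \ref{homot-red} together with an analog of Lemma \ref{PeanoReducedCont}.
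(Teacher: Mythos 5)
Your five-step cycle $(1)\Rightarrow(2)\Rightarrow(3)\Rightarrow(4)\Rightarrow(5)\Rightarrow(1)$ is exactly the paper's, and the constructions you propose for the two hard steps --- a section of $\pi_0(X)$ for $(5)\Rightarrow(1)$, and a realization of a shrinking sequence of null-homotopic loops for $(1)\Rightarrow(2)$ --- are also the paper's. The serious gap is the injectivity claim in $(1)\Rightarrow(2)$. You assert that a null-homotopy of a word outside $\langle\langle \{a_n\}_n\rangle\rangle$ would, via its trace on a slice, produce a path in $U$ from some $x_n$ to the corresponding $y_n$, contradicting the horseshoe property. But the raw horseshoe data only forbids $U$-paths from $x_n$ to $y_n$; the trace of a null-homotopy could just as well route through $U$-paths joining $x_n$ to $x_m$ or to $y_m$ with $m\neq n$, or to $x$ itself, and none of these is excluded, so no contradiction results. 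The paper must first upgrade the sequences (Lemma \ref{nicesequences}, a Ramsey-theorem argument) so that no two distinct points of $\{x_n\}_n\cup\{y_n\}_n\cup\{x\}$ are joined by any path in $U$, and even then the argument is not a one-liner: it runs through a metric separation statement (Lemma \ref{gap}), a truncation of the null-homotopy into $\Sigma(U,x)$, and the injective endomorphism $a_n\mapsto a_{2n}a_{2n+1}^{-1}$ combined with Lemma \ref{sectioninclusion}, whose own proof requires the graph/covering construction. Your plan omits all of this, and it is where the real content of the theorem lies.

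A second, smaller gap is in $(5)\Rightarrow(1)$: an arbitrary choice of representatives will not do. For the rerouted loop and the rerouting homotopy to be continuous at accumulation points of $\gamma^{-1}(x)$, the representative $f(C)$ of a component $C$ must be chosen inside (essentially) the smallest basic neighborhood $U_n$ that $C$ still meets --- the paper's \emph{tight section}. Otherwise the connecting paths $\delta_\lambda$ and their endpoints $f(C_\lambda)$ need not converge to $x$ even when the excursions of $\gamma$ do, and your strengthened non-horseshoe statement (which presupposes that \emph{both} sequences converge to $x$) cannot be applied to the pair $(\gamma_1(\mu_\lambda))_\lambda$, $(f(C_\lambda))_\lambda$. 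A minor slip: the disk $I\times[t_n,1]$, $(s,r)\mapsto(\alpha_n(s),r)$ does not have $\beta_n$ as its boundary, since $\beta_n$ also contains the column segments below level $t_n$; the correct disk is $I\times[-1,1]$, whose boundary loop is $\gamma_{x_n}\ast\gamma_{y_n}^{-1}\simeq\beta_n$.
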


We present the proof in a sequence of propositions.  The implication (2) $\Rightarrow$ (3) follows from the fact that $\mathbb{Q}$ embeds as a subgroup of $\pi_1(HA)$ (see \cite{CHM} for an explicit construction of such an embedding, or \cite{Ho}).  The claim (3) $\Rightarrow$ (4) is trivial.  We prove the implication (4) $\Rightarrow$ (5) in Proposition \ref{fourtofive}.  The claim (5) $\Rightarrow$ (1) will be the content of Proposition \ref{fivetoone}.  Finally, the implication (1) $\Rightarrow$ (2) will be shown in Proposition \ref{onetotwo}.

\begin{proposition}\label{fourtofive}  If $(Y, y)$ is totally path disconnected, Hausdorff and first countable at $y$ then $\pi_1(Y, y)$ has no infinitely divisible elements.  
\end{proposition}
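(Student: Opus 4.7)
The plan is to use the characterization of $\pi_1(\Sigma(Y,y))$ provided by Lemma~\ref{characterizationastopprod}, which gives an isomorphism $\pi_1(\Sigma(Y,y)) \simeq \topprod_n F(\mu_n)$ for a suitable sequence of cardinals $(\mu_n)_n$ coming from a nested basis of neighborhoods of $y$. (I am reading the statement as concerning $\pi_1(\Sigma(Y,y))$, since condition (4) in Theorem~\ref{thmOmega} is about the suspension and (4)$\Rightarrow$(5) is the goal.) With this identification the problem reduces to a purely group-theoretic one: show that the topologist product $\topprod_n F(\mu_n)$ has no nontrivial infinitely divisible element.

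The key step is to push an alleged infinitely divisible element through the retractions $p_N : \topprod_n F(\mu_n) \to \ast_{n\leq N} F(\mu_n)$ that are part of the definition of the topologist product. Since each $p_N$ is a homomorphism, if $g = h_k^k$ for every $k \geq 1$ then $p_N(g) = p_N(h_k)^k$ for every $k$, so $p_N(g)$ is infinitely divisible in the free product $\ast_{n\leq N} F(\mu_n)$. But a free product of free groups is a free group, and free groups have no nontrivial infinitely divisible elements (for instance, because the abelianization is a free abelian group, in which only $0$ is infinitely divisible). Hence $p_N(g) = 1$ for every $N$.

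To conclude, I would appeal directly to the definition of $\topprod_n F(\mu_n)$ as $\mathcal{W}(F(\mu_n))_n / {\sim}$, where $W_0 \sim W_1$ precisely when $p_N(W_0) = p_N(W_1)$ in $\ast_{n\leq N} F(\mu_n)$ for every $N$. Writing $g = [W]$ and applying the previous paragraph with the empty word as the other representative, the vanishing $p_N([W]) = 1$ for all $N$ forces $[W]$ to be trivial. This gives the contrapositive of (4)$\Rightarrow$(5): if $X$ were tpd-type, i.e.\ $\pi_1(\Sigma(X,x)) \simeq \pi_1(\Sigma(Y,y))$ for a totally path disconnected first countable Hausdorff $Y$, then no nontrivial element of $\pi_1(\Sigma(X,x))$ could be infinitely divisible.

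I do not expect any serious obstacle here; the only thing one might worry about is the claim that $\ast_{n\leq N} F(\mu_n)$ is free, which is immediate since a free product of free groups on sets $S_n$ is free on the disjoint union $\bigsqcup_{n \leq N} S_n$. The rest is essentially bookkeeping with the explicit presentation of the topologist product given in Section~\ref{Prelim}.
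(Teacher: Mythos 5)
Your proposal is correct and follows essentially the same route as the paper: identify $\pi_1(\Sigma(Y,y))$ with $\topprod_n F(\mu_n)$ via Lemma \ref{characterizationastopprod}, observe that a nontrivial element survives under some retraction $p_N$, and note that the image $p_N(g)$ would be an infinitely divisible element of the free group $\ast_{n\leq N}F(\mu_n)$, which is impossible. Your reading of the statement as concerning $\pi_1(\Sigma(Y,y))$ is also the intended one.
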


\begin{proof}  We saw in Lemma \ref{characterizationastopprod} that $\pi_1(\Sigma(Y, y)) \simeq \topprod_{n} F(\mu_n)$ for some sequence $(\mu_n)_n$ of cardinals.  By the definition of the topologist product we know $g\in \topprod_{n} F(\mu_n)$ is not identity if and only if for some $N\in \mathbb{N}$ we have $p_N(g) \neq 1$.  If $g$ is infinitely divisible then so is $p_N(g)$, but $p_N(g)$ is in the free subgroup $\ast_{n\leq N}F(\mu_n)$ and free groups have no infinitely divisible elements.
\end{proof}

We build towards Proposition \ref{fivetoone}, first giving a construction of a graph and then proving a lemma.

Suppose $(X, x)$ is Hausdorff and also first countable at the basepoint.  Let $X = U_0 \supseteq U_1 \supseteq \cdots$ be a nesting basis of open neighborhoods of $x$, so in particular $\bigcap_n U_n = \{x\}$.  For $z\in X \setminus \{x\}$ let $g_0(z)$ be the maximum $n$ such that $z\in U_n$ and $g_1(z)$ be the minimum $n$ such that $z\notin \overline{U_n}$ (we are using the Hausdorff condition here).  Thus $g_0(z) <g_1(z)$.

\begin{construction}\label{graph}  Given a Peano continuum $Z \subseteq \Sigma(X, x)$ we construct a graph $\Gamma_Z$.  The construction of $\Gamma_Z$ will use certain arbitrary choices along the way.  Let $\check{C} = \check{X} \cap Z$ and $C = \chi(\check{C})$, so both $C$ and $\check{C}$ are compact but probably not Peano continua.  For $z\in C \setminus \{x\}$ we let $O_z' \subseteq (U_{g_0(z)} \setminus \overline{U_{g_1(z)}})\times (-\frac{1}{2}, \frac{1}{2}) \subseteq \mathring{X}$ be an open set such that $O_z' \cap Z$ is path connected and $\check{\iota}(z) \in O_z'$  (here we are using the fact that Peano continua are locally path connected).  Let $O_z = \chi(O_z' \cap \check{C})$ so that $O_z$ is a neighborhood of $z$ in the subspace $C \subseteq X$.  Notice that any two points $z_0, z_1 \in O_z$ can be connected by a path in $U_{g_0(z)} \setminus \overline{U_{g_1(z)}}$ since there is a path from $\check{\iota}(z_0)$ to $\check{\iota}(z_1)$ in $O_z' \cap Z$ which projects to a path in $U_{g_0(z)} \setminus \overline{U_{g_1(z)}}$ via $\chi$.

We define an open cover $\mathcal{O}$ of $C \setminus \{x\}$ by induction.  As $C \setminus U_1$ is compact and $\{O_z\}_{z\in C \setminus U_1}$ is an open cover, select a finite subcover $\mathcal{O}_0$.  The set $C \setminus (U_2 \cup \bigcup \mathcal{O}_0)$ is compact and covered by $\{O_z\}_{z\in C \setminus (U_2 \cup \bigcup \mathcal{O}_0)}$ so we pick a finite subcover $\mathcal{O}_1$.  Assuming $\mathcal{O}_n$ has been defined, notice that $C \setminus (U_{n+2} \cup \bigcup \bigcup_{j=0}^n \mathcal{O}_j)$ is compact and covered by $\{O_z\}_{z\in C \setminus (U_{n+1} \cup \bigcup \bigcup_{j=0}^n \mathcal{O}_j)}$ so we pick a finite subcover $\mathcal{O}_{n+1}$.  Let $\mathcal{O} = \bigcup_n \mathcal{O}_n$.  Define the graph $\Gamma_Z$ to have vertex set $\mathcal{O}$ and to have an edge between $O_{z_0}$ and $O_{z_1}$ if and only if $O_{z_0}' \cap O_{z_1}' \cap Z \neq \emptyset$.  Notice that each element of $\mathcal{O}$ is of finite valence since for each $O_z \in \mathcal{O}$ we have $O_z' \cap O_{z_0}' = \emptyset$ for each $O_{z_0} \in \mathcal{O} \setminus \bigcup_{n=0}^{g_1(z)} \mathcal{O}_n$.  
\end{construction}

\begin{remark}\label{niceremark}  If $J \subseteq \Gamma_Z$ is an infinite component then we can pick a tree $T \subseteq J$ such that $T$ has the same set of vertices as $J$ and all edges of $T$ are also edges in $J$.  Then $T$ would also be of finite valence, so by the K\"onig Infinity Lemma there exists in $T$, and therefore in $\Gamma_Z$, an infinite path $O_{z_0}, O_{z_1}, O_{z_2}, \ldots$ with $O_{z_m} \neq O_{z_n}$ if $m\neq n$.  Let $\rho_n: I \rightarrow U_{\min\{g_0(z_n), g_0(z_{n+1})\}}$ be a path from $z_n$ to $z_{n+1}$.  By pasting these paths together we obtain a path $\rho:I \rightarrow X$ from $z_0$ to $x$ by letting

\[
\rho(t) = \left\{
\begin{array}{ll}
\rho_n(\frac{t-(1-\frac{1}{n+1})}{\frac{1}{n+1} - \frac{1}{n+2}})
                                            & \text{if }t\in [1-\frac{1}{n+1}, 1-\frac{1}{n+2}], \\
x
                                            & \text{if }t = 1.
\end{array}
\right.
\]

\noindent That $\rho$ is indeed continuous follows from the pasting lemma and from the fact that for any $n\in \mathbb{N}$ we have $\rho(t)$ eventually in $U_n$.
\end{remark}

\begin{remark}\label{otherniceremark}  If $J \subseteq \Gamma_Z$ is a component then for any points $z, w \in (\bigcup J) \cap C$ there is a path in $X \setminus \{x\}$ from $z$ to $w$.  To see this, we let $O_{z_0}, O_{z_1}, \ldots O_{z_k}$ be a path in $J$ with $z \in O_{z_0}$ and $w \in O_{z_k}$.  We have a path in $X \setminus \{x\}$ from $z$ to $z_0$ obtained by projecting such a path in $O_{z_0}' \cap Z$ via $\chi$. There is also a path from $z_0$ to $z_1$, for $O_{z_0}' \cap O_{z_1}' \cap Z \neq \emptyset$ and so we can take a path from $z_0$ to a point in $O_{z_0}' \cap O_{z_1}' \cap Z$ and from that point to $z_1$ and project this path via $\chi$.  Similarly there are paths in $X \setminus \{x\}$ from $z_j$ to $z_{j+1}$ and from $z_k$ to $w$, so we gain a path from $z$ to $w$ by concatenation.
\end{remark}

For what follows, if $\mathcal{L}$ is a collection of disjoint sets a subset $Y \subseteq \bigcup \mathcal{L}$ is a \textbf{section} if $|Y \cap L| = 1$ for each $L \in \mathcal{L}$.

\begin{lemma}\label{sectioninclusion}  Suppose $(X, x)$ is Hausdorff and also first countable at $x$.  Let $Y \subseteq X$ be a section from $\pi_0(X)$ with $x\in Y$.  Then the homomorphism induced by inclusion $\iota_*: \pi_1(\Sigma(Y, x)) \rightarrow \pi_1(\Sigma(X, x))$ is injective.
\end{lemma}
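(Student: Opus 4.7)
The plan is to reduce injectivity of $\iota_*$ to the retraction machinery assembled before Lemma~\ref{unionofsubgroups}. First observe that $Y$ is Hausdorff, first countable at $x$, and totally path disconnected: any path in $Y$ is also a path in $X$ and so lies in a single path-component of $X$, which meets $Y$ in just one point by the one-representative-per-component property, so the path is constant. Therefore every result proved for totally path disconnected spaces in Section~\ref{tpd}, in particular Lemma~\ref{unionofsubgroups}, applies verbatim to $(Y,x)$.

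I would proceed as follows. Using Lemma~\ref{unionofsubgroups} inside $(Y,x)$, write $\pi_1(\Sigma(Y,x)) = \bigcup_{K\in\K_Y}\pi_1(\Sigma(K,x))$; any class $[\gamma]$ with $\iota_*[\gamma]=1$ lies in $\pi_1(\Sigma(K,x))$ for some $K \in \K_Y$. Next, $K$ automatically belongs to $\K_X$: it is compact in $Y$ hence in $X$, closed in the Hausdorff space $X$, and its limit points in $X$ coincide with its limit points in $Y$, which are contained in $\{x\}$. Invoke the retraction homomorphism $p_K^X\colon\pi_1(\Sigma(X,x))\to\pi_1(\Sigma(K,x))$ furnished by the discussion preceding Lemma~\ref{unionofsubgroups}; it is a left inverse to the inclusion-induced map $i_K^X\colon\pi_1(\Sigma(K,x))\to\pi_1(\Sigma(X,x))$. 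Since the inclusions $\Sigma(K,x)\hookrightarrow\Sigma(Y,x)\hookrightarrow\Sigma(X,x)$ compose to the direct inclusion $\Sigma(K,x)\hookrightarrow\Sigma(X,x)$, one has $\iota_*\circ i_K^Y = i_K^X$, and hence
\[
[\gamma] \;=\; p_K^X(i_K^X[\gamma]) \;=\; p_K^X(\iota_*[\gamma]) \;=\; p_K^X(1) \;=\; 1
\]
in $\pi_1(\Sigma(K,x))$. Using the retraction $p_K^Y$ to realize $\pi_1(\Sigma(K,x))$ as a subgroup of $\pi_1(\Sigma(Y,x))$, this forces $[\gamma]=1$ in $\pi_1(\Sigma(Y,x))$.

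The main obstacle is justifying existence of $p_K^X$ when $X$ is not totally path disconnected. The verification before Lemma~\ref{unionofsubgroups} that the authors call ``straightforward'' is carried out in a totally-path-disconnected ambient space, where each component of $I\setminus\gamma^{-1}(x)$ sits above a single point of $X$, making the formula ``$\gamma'(t)=\gamma(t)$ if $\gamma(t)\in\Sigma(K,x)$ and $\gamma'(t)=x$ otherwise'' automatically continuous. In the present setting a component's image can spread across a whole path component $C$ of $X$ meeting $K$ in its lone $Y$-representative $y_C$, so the naive formula may fail continuity. The resolution exploits the crucial feature of $K\subseteq Y$: each path component of $X$ meets $K$ in at most one point. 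After reducing $\gamma$ via Lemma~\ref{loop-red}, on each component $K_0$ of $I\setminus\gamma^{-1}(x)$ one should homotope $\gamma\upharpoonright K_0$ within $\Sigma(C_{K_0}\cup\{x\},x)$ so that its image lies in $\Sigma(\{x,y_{C_{K_0}}\},x)$ (when $y_{C_{K_0}}\in K$) or is contracted into $\{x\}$ (when $y_{C_{K_0}}\notin K$); patching these homotopies yields a continuous representative of $p_K^X[\gamma]$, whose homotopy class in $\pi_1(\Sigma(K,x))$ must be checked to be independent of all choices.
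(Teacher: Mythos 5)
Your opening reduction is sound and matches the paper: $Y$ is totally path disconnected, so Lemma \ref{unionofsubgroups} applies to $(Y,x)$, any $[\gamma]\in\ker\iota_*$ lies in some $\pi_1(\Sigma(K,x))$ with $K\in\K_Y\subseteq\K_X$, and everything reduces to producing a left inverse to $\pi_1(\Sigma(K,x))\to\pi_1(\Sigma(X,x))$. You also correctly identify the crux: the retraction $p_K$ from the discussion before Lemma \ref{unionofsubgroups} is only available when the ambient space is totally path disconnected. But your proposed resolution does not close this gap. The homotope-and-patch construction of $p_K^X$ fails already at the level of continuity of the output loop: a loop $\gamma$ in $\Sigma(X,x)$ can have infinitely many components of $I\setminus\gamma^{-1}(x)$ accumulating at some $t_0\in\gamma^{-1}(x)$, each mapping into $C\times D_1^\circ$ for a fixed path component $C$ whose representative $y_C\in K$ is far from $x$; pushing each such piece onto the circle over $y_C$ produces infinitely many traversals of a fixed circle accumulating at $t_0$, which is not continuous. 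Controlling this requires exactly the hypotheses of Proposition \ref{fivetoone} -- tightness of the section (so that $y_{C_\lambda}\to x$ along shrinking components) and the non-horseshoe condition (so that the connecting paths $\delta_\lambda$ stay in shrinking neighborhoods $U_{u(\lambda)}$). Lemma \ref{sectioninclusion} carries neither hypothesis, and indeed it is invoked in Proposition \ref{onetotwo} precisely for horseshoe spaces, so any argument that secretly needs non-horseshoe cannot be right here. Moreover, even granting a definition on loops, the real work is well-definedness on homotopy classes, i.e.\ handling an arbitrary nulhomotopy $H:I^2\to\Sigma(X,x)$; your sketch defers this entirely to ``must be checked.''

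The paper's proof avoids constructing $p_K^X$ on all of $\pi_1(\Sigma(X,x))$. After reducing to $K\in\K_Y$ and fixing a nontrivial $[W]$ with $p_N([W])\neq 1$, it works only with the compact set $H(I^2)$, which is a Peano continuum. Construction \ref{graph} produces a locally finite graph whose components separate the finitely many points $x_0,\dots,x_q$ of $K\setminus U_{N+1}$ (here the selection property of $Y$ is used: distinct $x_j$ lie in distinct path components of $X$, so their graph components and hence the open sets $\bigcup C_{x_j}'$ are pairwise disjoint). A finite subcover then yields an explicit continuous map $f:H(I^2)\to\Sigma(K\setminus(U_{N+1}\setminus\{x\}),x)$ collapsing each cluster to the circle over its $x_j$, with $f\circ\iota$ homotopic to the retraction realizing $p_N$; this forces $p_N([W])=1$, a contradiction. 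The essential ideas you are missing are the passage from $\Sigma(K,x)$ to the \emph{finite} wedge $\Sigma(K\setminus(U_{N+1}\setminus\{x\}),x)$, and the use of compactness and local path connectedness of $H(I^2)$ to get a finite, pairwise-disjoint cover on which the collapse is manifestly continuous.
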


\begin{proof}  Assume the hypotheses and let $X = U_0\supseteq U_1 \supseteq \cdots$ be a basis of open neighborhoods at $x$.  Since $(Y, x)$ is totally path disconnected, Hausdorff, and first countable at $x$ we know by Lemma \ref{unionofsubgroups} that $\pi_1(\Sigma(Y, x)) = \bigcup_{K\in \mathcal{K}_Y}\pi_1(\Sigma(K, x))$ in the notation of that lemma (so, $\mathcal{K}_Y$ is the collection of all compact subsets $K$ of $Y$ such that $K \setminus U_n$ is finite for each $n \in \mathbb{N}$).  Thus it suffices to show that $\iota_*\upharpoonright\pi_1(\Sigma(K, x))$ is injective for any $K\in \mathcal{K}_Y$, so fixing $K\in \mathcal{K}_Y$ we may simply assume $Y = K$.   We have by Lemma \ref{characterizationastopprod} an isomorphism of $\pi_1(\Sigma(K, x))$ with $\topprod_n F(\lambda_n)$, where $\lambda_n = \card(K \cap (U_n \setminus U_{n+1}))$.  By definition of $\mathcal{K}_Y$ we have each $\lambda_n$ finite.  Supposing for contradiction that $[W] \in \topprod_n F(\lambda_n)$ is a nontrivial element such that $[W] \in \ker(\iota_*)$ we select $N_0\in \mathbb{N}$ large enough that $p_{N_0}([W]) \neq 1$.  If $\gamma = \mathcal{R}([W])$ is a realization of $[W]$ (see Section \ref{Prelim}) then we have by assumption a nulhomotopy $H:I^2 \rightarrow \Sigma(X, x)$ of $\gamma$.  We may assume without loss of generality that $\Sigma(K, x) \subseteq H(I^2)$.

Perform Construction \ref{graph} of the vertex set $\mathcal{O}$ and graph $\Gamma_{Z}$ for the Peano continuum $Z = H(I^2)$, and as before we let $\check{C} = \check{X} \cap Z$ and $C = \chi(\check{C})$.  Letting $\{z_0, z_1, \ldots, z_q\} = K \setminus U_{N_0+1}$ we let $J_{z_j}$ be the path component in $\Gamma_{Z}$ of an $O \in \mathcal{O}$ such that $z_j \in O$.  For each $O \in \mathcal{O}$ we let $O'$ be the open set used in the construction such that $O' \cap Z$ is path connected and $\chi(O' \cap \check{C}) = O$.  For $0\leq j\leq q$ let $J_{z_j}' = \{O'\}_{O\in J_{z_j}}$.

For $z_{j_0} \neq z_{j_1}$ we have $(\bigcup J_{z_{j_0}}')\cap (\bigcup J_{z_{j_1}}') \cap Z = \emptyset$, since if there is some $O' \in J_{z_{j_0}}'$ and $V' \in J_{z_{j_1}}'$ with $O' \cap V' \cap Z \neq \emptyset$ then there is an edge between $O$ and $V$, so that in fact the components $J_{z_{j_0}}$ and $J_{z_{j_1}}$ are equal, and by Remark \ref{otherniceremark} we have a path in $X$ from $z_{j_0}$ to $z_{j_1}$ contrary to the fact that $z_{j_0}$ and $z_{j_1}$ are in different path components in $X$.  Also by Remark \ref{niceremark} each $J_{z_j}$ is finite since $x$ and $x_j$ are in distinct path components in $X$.  Thus we select $N \geq N_0$ large enough that $(U_{N} \times [-1, 1]) \cap \bigcup_{O' \in J_{z_{j}}'}O' = \emptyset$ for all $0 \leq j \leq q$.

Now $Z = H(I^2)$ includes into the set $$(U_{N} \times [-1, 1]) \cup \bigcup_{O \in \mathcal{O}}O' \cup \bigcup_{m \geq 2} X \times ([-1, -\frac{1}{m}) \cup (\frac{1}{m}, 1])$$ so by compactness we may select $M\in \mathbb{N}$ such that $H(I^2) \subseteq (U_{N} \times [-1, 1]) \cup \bigcup_{O \in \mathcal{O}} O' \cup X \times ([-1, -\frac{1}{M}) \cup (\frac{1}{M}, 1])$.

Define $f:H(I^2) \rightarrow  \Sigma(K \setminus (U_{N_0 + 1} \setminus \{x\}), x)$ by

\[
f((z, l)) = \left\{
\begin{array}{ll}
(z_j, Ml)
                                            & \text{if } (z, l)\in \bigcup J_{z_j}'\text{ and }l\in [-\frac{1}{M}, \frac{1}{M}], \\
x                                           & \text{otherwise}.
\end{array}
\right.
\]

We argue that $f$ is continuous.  Since the domain of $f$ is a Peano continuum, it is sufficient to prove that $f$ is sequentially continuous.  Thus we let $(y_r)_{r \in \mathbb{N}}$ be a sequence converging to $y$, with $y, y_r \in H(I^2)$.  We know that $f$ maps the open set $H(I^2) \cap ((U_{N} \times D_1) \cup X \times ([-1, -\frac{1}{M}) \cup (\frac{1}{M}, 1]))$ to the point $x$, so if $y$ is in this open set then the $f(y_r)$ are eventually equal to $x$.  Similarly the open set $H(I^2) \cap \bigcup_{O \in \mathcal{O} \setminus \bigcup_{j = 0}^q J_{z_j}} O'$ maps to $x$, so if $y$ is in this set we have $f(y_r)$ eventually equal to $x$.  Finally if $y \in H(I^2) \cap \bigcup_{j = 0}^q J_{z_j}'$ then for exactly one $0 \leq j \leq q$ we have $y \in H(I^2) \cap J_{z_j}'$ (we argued above regarding the disjointness of these intersections).  In this case the sequence $(y_r)_{r\in \mathbb{N}}$ is eventually in $H(I^2) \cap J_{z_j}'$ so we can assume without loss of generality that $(y_r)_{r} \subseteq H(I^2) \cap J_{z_j}'$.  The function $f$ maps $H(I^2) \cap J_{z_j}'$ to $\Sigma(\{z_j, x\}, x)$.  We may write in coordinates $y = (w, \eta)$ with $w \in X \setminus U_{N}$ and $\eta \in (-\frac{1}{2}, \frac{1}{2})$ and similarly $y_r = (w_r, \eta_r)$ with $w_r \in X \setminus U_{N}$ and $\eta_n \in (-\frac{1}{2}, \frac{1}{2})$, so that $w_r \rightarrow w$ and $\eta_r \rightarrow \eta$.  If $\eta \in [-1, -\frac{1}{M}] \cup [\frac{1}{M}, 1]$ then $f(y) = x$ and as either $f(y_r) = (z_j, M\eta_r)$ or $f(y_r) = x$ we see $f(y_r) \rightarrow x$ (in the first case, one notices that $M\eta_r$ gets arbitrarily close to $1$ or $-1$).  If $\eta \in (-\frac{1}{M}, \frac{1}{M})$ then without loss of generality $\eta_r \in (-\frac{1}{M}, \frac{1}{M})$ for all $r\in \mathbb{N}$ and $f(y_r) = (z_j, M\eta_r) \rightarrow (z_j, M\eta)$ and so we have shown that $f$ is continuous.

Note that $f\circ \iota:\Sigma(K, x) \rightarrow \Sigma(K \setminus (U_{N_0 + 1}\setminus \{x\}), x)$ is given by 

\[
(f\circ \iota)((z, l)) = \left\{
\begin{array}{ll}
(z_j, Ml)
                                            & \text{if } z = z_j\text{ and }l\in [-\frac{1}{M},\frac{1}{M}], \\
x                                           & \text{otherwise}.
\end{array}
\right.
\]

\noindent The map $T: \Sigma(K, x) \times I \rightarrow \Sigma(K \setminus (U_{N_0 + 1}\setminus \{x\}), x)$ defined by

\[
T((z,l), s) = \left\{
\begin{array}{ll}
(z, ((1 - s) + sM)l)
                                            & \text{if } z\notin U_{N+1}\text{ and }|((1 - s) + sM)l| \leq 1, \\
x                                           & \text{otherwise}.
\end{array}
\right.
\]

\noindent is a homotopy of the retraction $R: \Sigma(K, x) \rightarrow  \Sigma(K \setminus (U_{N_0 + 1}\setminus \{x\}), x)$ to $f\circ \iota$.  The induced retraction $R_*: \pi_1(\Sigma(K, x)) \rightarrow \pi_1(\Sigma(K \setminus (U_{N_0 + 1}\setminus \{x\}), x))$ corresponds to the retraction $p_{N_0}: \topprod_n F(\lambda_n) \rightarrow \topprod_{n \leq N_0} F(\lambda_n)$.  Thus on one hand we know that $R_*([\gamma]) = R_*([\mathcal{R}([W])]) \neq 1$ since $p_{N_0}([W]) \neq 1$, but on the other hand $R_*([\gamma]) = (f\circ \iota)_*([\gamma])$ is nulhomotopic in $\Sigma(K \setminus (U_{N_0 + 1}\setminus \{x\}), x)$ as witnessed by $f \circ H$.  Thus we obtain a contradiction.

%
%
%
%

\end{proof}

Suppose that $(X, x)$ is Hausdorff and first countable at $x$ and not a horseshoe space.  Then for each path component $C\in \pi_0(X)$ with $x\notin C$ there exists a minimal $n$ such that $U_{n+1}\cap C = \emptyset$.  Were this not the case we would get a sequence $(x_n)_n$ with $x_n \in C \cap U_n$.  We let sequence $(y_n)_n$ be defined by letting $y_0 = x_0$ and $y_{n+1} = x_n$, and since $(X, x)$ is not horseshoe we have for each $k\in \mathbb{N}$ an $n_k\in \mathbb{N}$ such that $n \geq n_k$ implies there exists a path in $U_k$ from $x_n$ to $y_n$ (else we could pick a subsequence and use $U = U_k$ to witness that $(X, x)$ is horseshoe).  This implies the existence of a path from $x_0$ to $x$, which contradicts $x\notin C$.

We say a section $Y \subseteq X$ of $\pi_0(X)$ with $x\in Y$ is \textbf{tight} if $z\in Y \setminus U_n$ implies $C \cap U_n  = \emptyset$ where $z\in C \in \pi_0(X)$.  We note that $(X, x)$ Hausdorff, first countable at $x$ and not a horseshoe space implies that $X$ has a tight section by selecting for each $C\in \pi_0(X)$ with $x\notin C$ the $n$ with $C \cap U_n \neq \emptyset$ and $C \cap U_{n+1} = \emptyset$ and picking $x_C\in C \cap U_n$.

We'll make use of the following very nice fact elucidated by Brazas and Gillespie (see \cite[Remark 3.5]{BrazGill}):

\begin{lemma}\label{Jeremynice}  If $(X, x)$ is a pointed Hausdorff space and $\gamma: I \rightarrow \Sigma(X, x)$ is a loop based at $x$, then $\gamma$ is homotopic to a reduced loop $\gamma_0: I \rightarrow \Sigma(X, x)$ at $x$ such that for each maximal interval $O \subseteq I \setminus \gamma_0^{-1}(\{x\})$ we have that $\chi \circ (\gamma_0 \upharpoonright O)$ is constant.

\end{lemma}

\begin{proposition}\label{fivetoone}
Let $(X,x)$ be Hausdorff and first countable at $x$ and not horseshoe and let $Y$ be a tight section.  Then

\begin{enumerate}[(a)]

\item the map $\iota_*: \pi_1(\Sigma(Y,x)) \rightarrow \pi_1(\Sigma(X,x))$ induced by inclusion is injective;

\item every loop $\gamma$ in $\Sigma(X,x)$ is homotopic to one in $\Sigma(Y,x)$, thus $\iota_*: \pi_1(\Sigma(Y,x)) \rightarrow \pi_1(\Sigma(X,x))$ is an isomorphism.
\end{enumerate}

\end{proposition}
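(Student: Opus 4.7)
For part (a), I will simply invoke Lemma \ref{sectioninclusion}: since $Y$ is a selection from $\pi_0(X)$ containing $x$, the lemma yields injectivity of $\iota_*$ directly. For part (b), in view of (a) it suffices to show surjectivity, i.e.~that every loop $\gamma$ in $\Sigma(X,x)$ is homotopic to one in $\Sigma(Y,x)$. I will assume $\gamma$ is reduced (Lemma \ref{loop-red}), and for each connected component $J$ of $I\setminus\gamma^{-1}(x)$ write $C_J$ for the path component of $X$ into which $\chi\circ\gamma(J)$ is contained and $y_J\in Y$ for its representative. The goal is to construct a path-homotopy $H:I\times I\to\Sigma(X,x)$ that is constantly $x$ on $\gamma^{-1}(x)\times I$ and that on each $\overline J\times I$ is a homotopy $H_J$ rel endpoints from $\gamma|_{\overline J}$ to a loop $\beta_{y_J}^{n_J}$ in $\Sigma(\{x,y_J\},x)$, where $\beta_{y_J}$ is the natural generator of $\pi_1(\Sigma(\{x,y_J\},x))\cong\mathbb{Z}$ traversing $\{y_J\}\times D_1$ once and $n_J\in\mathbb{Z}$.

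\textbf{Constructing the pieces $H_J$.} The non-horseshoe hypothesis will be used in the following reformulated form (a diagonal argument against the nested basis $(U_n)_n$): for each $k\in\mathbb{N}$ there is $k'=f(k)\geq k$ such that any two points of $U_{k'}$ in the same path component of $X$ are joined by a path inside $U_k$. Combined with the observation preceding the definition of tight section---that every path component $C\neq\{x\}$ satisfies $C\cap U_{n+1}=\emptyset$ for some $n$---this will give the needed size control on $H_J$. Specifically, I will let $k_J$ be the largest $k$ with $\gamma(\overline J)\subseteq\Sigma(U_k,x)$ and $m_J=\max\{m:f(m)\leq k_J\}$, so that $\gamma(\overline J)\subseteq\Sigma(U_{f(m_J)},x)$; tightness of $Y$ then forces $y_J\in U_{f(m_J)}$ (since $C_J\cap U_{f(m_J)}\neq\emptyset$), and letting $D_J$ be the path component of $y_J$ in $C_J\cap U_{m_J}$, the reformulated non-horseshoe gives $C_J\cap U_{f(m_J)}\subseteq D_J$. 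Hence $\gamma(\overline J)\subseteq\Sigma(\{x\}\cup D_J,x)\subseteq\Sigma(U_{m_J},x)$. Because $D_J$ is path connected and $x$ remains isolated in $\{x\}\cup D_J$, the space $\Sigma(\{x\}\cup D_J,x)$ is homeomorphic to $SD_J/(p_+\sim p_-)$ and has $\pi_1\cong\mathbb{Z}$ generated by $\beta_{y_J}$; I will therefore take $H_J$ to be any homotopy rel endpoints from $\gamma|_{\overline J}$ to $\beta_{y_J}^{n_J}$ inside $\Sigma(\{x\}\cup D_J,x)$.

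\textbf{Global continuity of $H$.} At points off $\gamma^{-1}(x)\times I$ the map $H$ will locally agree with a single $H_J$ and continuity is automatic. For $t_0\in\gamma^{-1}(x)$, given any $k$, the set $\mathcal{B}=\{J:\gamma(\overline J)\not\subseteq\Sigma(U_{f(k)},x)\}$ will be finite, since $\gamma^{-1}(\Sigma(X,x)\setminus\Sigma(U_{f(k)},x))$ is compact and disjoint from $\gamma^{-1}(x)$, hence covered by finitely many components. I will choose a neighborhood $W$ of $t_0$ in $I$ disjoint from $\overline J$ for every $J\in\mathcal{B}$ with $t_0\notin\overline J$, and small enough that for the at most two $J_i\in\mathcal{B}$ adjacent to $t_0$, continuity of $H_{J_i}$ at $t_0$ yields $H_{J_i}\big((W\cap\overline{J_i})\times I\big)\subseteq\Sigma(U_k,x)$. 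Then for $(t,s)\in W\times I$: if $t\in\gamma^{-1}(x)$ then $H(t,s)=x$; if $t\in J\notin\mathcal{B}$ then $k_J\geq f(k)$, so $m_J\geq k$ and $H_J(\overline J\times I)\subseteq\Sigma(U_k,x)$; and if $t$ lies in an adjacent $J_i\in\mathcal{B}$, $H_{J_i}(t,s)\in\Sigma(U_k,x)$ by the choice of $W$. This gives $H(W\times I)\subseteq\Sigma(U_k,x)$, confirming continuity. The loop $H(\cdot,1)$ has image in $\bigcup_J\Sigma(\{x,y_J\},x)\subseteq\Sigma(Y,x)$, providing the desired homotopy of $\gamma$ into $\Sigma(Y,x)$.

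\textbf{Main obstacle.} The delicate point is synchronizing the individual homotopies $H_J$ into a globally continuous $H$ when infinitely many components $J$ accumulate at points of $\gamma^{-1}(x)$. Both ingredients are essential: tightness of $Y$ ensures the representative $y_J$ is as deep as $C_J$ reaches, while the reformulated non-horseshoe condition ensures that the homotopy $H_J$ itself can be carried out within a prescribed neighborhood of $x$.
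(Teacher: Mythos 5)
Part (a) and your set-up for (b) are fine: the diagonalized form of the non-horseshoe hypothesis, the use of tightness to place $y_J$ in $U_{f(m_J)}$, and the chain $\chi\circ\gamma(J)\subseteq C_J\cap U_{f(m_J)}\subseteq D_J\subseteq U_{m_J}$ are all correct, and the skeleton (bound, via non-horseshoe plus tightness, the depth at which each component can be homotoped onto the circle over its representative) matches the paper's. But your global continuity argument has a genuine gap. The set $\Sigma(U_{f(k)},x)$ is \emph{not} a neighbourhood of $x$ in $\Sigma(X,x)$: basic neighbourhoods have the form $\Sigma(U,x)\cup\big(X\times([-1,-1+\epsilon)\cup(1-\epsilon,1])\big)$, so a point $(z,l)$ with $z\notin U_{f(k)}$ is close to $x$ as soon as $|l|$ is close to $1$. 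Consequently $\gamma^{-1}\big(\Sigma(X,x)\setminus\Sigma(U_{f(k)},x)\big)$ need not be closed, and your set $\mathcal{B}$ need not be finite: a loop can have infinitely many components $J_n$ accumulating at some $t_0\in\gamma^{-1}(x)$, each dipping into $(X\setminus U_{f(k)})\times\{l:|l|>1-\frac{1}{n}\}$ near a pole. For such $J_n$ the depths $k_{J_n}$ and $m_{J_n}$ stay bounded, the sets $D_{J_n}$ need not shrink, and ``any homotopy rel endpoints inside $\Sigma(\{x\}\cup D_{J_n},x)$'' may pass through the equatorial band $D_{J_n}\times\{0\}$, far from $x$; then $H$ is discontinuous at $(t_0,s)$ for intermediate $s$. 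To repair this you must control the $D_1$-coordinate of each $H_J$ as well as its $X$-coordinate, which is exactly why the paper's homotopy is built in two explicit stages (the first preserving $\gamma_2$) and why its continuity estimate is phrased in terms of the full neighbourhoods $\Sigma(U_n,x)\cup(X\times([-1,-1+\frac{1}{n})\cup(1-\frac{1}{n},1]))$ together with a finite-to-one depth function $u$ rather than a finiteness claim about the intervals themselves.

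A second, independent gap is the case where $C_J$ is the path component of $x$ (unavoidable when $X$ is path connected, and forced whenever an interval $J$ approaches $x$ ``through the side'', i.e.\ with $\chi\circ\gamma\to x$ rather than $\gamma_2\to\pm1$). Then $y_J=x$, the point $x$ lies in $D_J$ and is not isolated in $\{x\}\cup D_J$, and your identification of $\Sigma(\{x\}\cup D_J,x)$ with $SD_J/(p_+\sim p_-)$ and the claim $\pi_1\cong\mathbb{Z}$ both fail; the lifting of $\gamma|_{\overline J}$ to a simply connected double cone, which is what really produces $H_J$ in the other cases, is unavailable here. The paper's trichotomy $\Lambda_0,\Lambda_+,\Lambda_-$ exists precisely to isolate these intervals and push them to the suspension poles instead of sliding them to a representative. (Even when $x$ is isolated in $\{x\}\cup D_J$, the assertion $\pi_1(\Sigma(\{x\}\cup D_J,x))\cong\mathbb{Z}$ is stronger than needed and not obvious for wild $D_J$; what you actually use --- that $\gamma|_{\overline J}$, whose interior misses the basepoint, lifts to the simply connected unreduced suspension $SD_J$ and can be straightened there --- is correct but should be stated as such.)
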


\begin{proof}  We have already seen by Lemma \ref{sectioninclusion} that part (a) holds even if the section $Y$ is not tight.  For (b), it is immediate from the definition that since $X$ is not horseshoe there exists a nondecreasing function $j: \mathbb{N} \rightarrow \mathbb{N}$ such that $n \leq j(n)$ and if $z_0, z_1 \in U_{j(n)}$ are in the same path component in $X$ then they are in the same path component in $U_n$.  We take $\gamma: I \rightarrow \Sigma(X, x)$ to be a loop based at $x$.  By Lemma \ref{Jeremynice} we may assume that $\gamma$ is reduced and that for each maximal interval $O \subseteq I \setminus \gamma^{-1}(\{x\})$ we have $\chi \circ (\gamma \upharpoonright O)$ is constant, say constantly equal to $x_O \in X \setminus \{x\}$.  Let $\mathcal{O}$ denote the set of maximal intervals in $I \setminus \gamma^{-1}(\{x\})$.  For $O \in \mathcal{O}$ let $x_O$ be in path component $C \subseteq X$ and take $n \in \mathbb{N}$ to be the largest possible such that $x_O, x_C \in U_{j(n)}$, where $\{x_C\} = C \cap Y$.  Select a path $p_O: I \rightarrow X$ from $x_O$ to $x_C$ such that the image of $p_O$ lies entirely in $U_n$.

We define $\gamma_1: \bigcup \mathcal{O} \rightarrow X \setminus \{x\}$ and $\gamma_2: \bigcup \mathcal{O} \rightarrow D_1^\circ$ by the coordinates $\gamma(t) = (\gamma_1(t), \gamma_2(t))$ considered in $\mathring{X}$ (so $\gamma_1 = \chi \circ \gamma$ where defined).  Let $H: I^2 \rightarrow \Sigma(X, x)$ be given by 

\[
H(t, s) := \left\{
\begin{array}{ll}
x
                                            & \text{if } \gamma(t) = x, \\
(p_O(s), \gamma_2(t))                                           & \text{if } t\in O.
\end{array}
\right.
\]

\noindent The check that $H$ is continuous is straightforward, as for each $N \in \mathbb{N}$ there are only finitely many $O \in \mathcal{O}$ for which the image of $p_O$ has points outside of $U_N$.

\end{proof}

Next we provide a couple of lemmas towards Proposition \ref{onetotwo}.  We'll utilize the following well-known corollary of Ramsey's Theorem  (see \cite[Theorem 9.1]{J}).

\begin{theorem}\label{Ramsey}  Suppose that $J$ is a countably infinite set and $[J]^2 := \{\{j_0, j_1\}: j_0, j_1 \in J, j_0 \neq j_1\}$ is written as a disjoint union $[J]^2 = L_0 \sqcup L_1$.  Then there is an infinite subset $H \subseteq J$ with either $[H]^2 \subseteq L_0$ or $[H]^2 \subseteq L_1$.
\end{theorem}

\begin{lemma}\label{nicesequences}  If $(X, x)$ is a first countable horseshoe space there exists a neighborhood $U$ of $x$ and sequences $(x_n)_n$ and $(y_n)_n$ in $U$ converging to $x$ such that 

\begin{enumerate} \item There exists a path in $X$ from $x_n$ to $y_n$

\item There does not exist a path in $U$ from $x_n$ to 

\begin{enumerate}[(i)] \item $y_m$ for any $m\in \mathbb{N}$

\item $x_m$ for $m\neq n$

\item $x$

\end{enumerate}

\item There does not exist a path in $U$ from $y_n$ to

\begin{enumerate}[(i)]  \item $x_m$ for any $m\in \mathbb{N}$

\item $y_m$ for $m\neq n$

\item $x$

\end{enumerate}

\item Exactly one of the following holds:

\begin{enumerate}[(i)]
\item For $m\neq n$ there is no path in $X$ from $x_n$ to $x_m$

\item All elements in $\{x_n\}_{n\in \mathbb{N}} \cup \{y_n\}_{n\in \mathbb{N}}$ are in the same path component in $X$

\end{enumerate}
\end{enumerate}
\end{lemma}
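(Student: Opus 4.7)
The plan is to begin with the given horseshoe data $U, (a_n), (b_n)$, use first countability at $x$ to fix a nested open basis $U = U_0 \supseteq U_1 \supseteq \cdots$, and pass to a subsequence with $a_n, b_n \in U_n$. A first simplification observes that at most one of $a_n, b_n$ can lie in the $U$-path-component $P_U(x)$ of $x$, for otherwise a path in $U$ between them would contradict horseshoe. By renaming $a_n \leftrightarrow b_n$ where necessary and passing to a further subsequence I would arrange $a_n \notin P_U(x)$ for every $n$, which is a first step toward (2)(iii).

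Next, to address condition (4), I would apply Ramsey's theorem to the equivalence relation on $\mathbb{N}$ defined by $n \sim m$ if and only if $a_n$ and $a_m$ lie in the same path component of $X$, extracting an infinite homogeneous subsequence that is either pairwise-distinct (yielding case (4)(i)) or monochromatic (yielding case (4)(ii)). In case (4)(i) the argument is easy: every $U$-path is an $X$-path, so the path-component distinctness of the pairs $(a_n, b_n)$ immediately delivers (2)(i), (2)(ii), (3)(i), (3)(ii); since at most one of the path components can contain $x$, discarding that index secures (2)(iii) and (3)(iii), completing this case with $x_n := a_n$, $y_n := b_n$. The exactness clause of (4) is automatic since (ii) fails as soon as there are two distinct pairs.

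The main technical hurdle is case (4)(ii), where all $a_n, b_n$ share a single path component of $X$ and $X$-component separation is no longer available. My plan is to apply further Ramsey-type extractions to the $U$-equivalence relations $a_n \sim_U a_m$, $b_n \sim_U b_m$, and $a_n \sim_U b_m$ for $n \neq m$, combined with a diagonal shrinking argument that replaces $U$ by a smaller basis neighborhood $U_k$ whenever a Ramsey extraction collapses one of these relations to the monochromatic outcome. The shrinking step is legitimate because the horseshoe condition transfers to any $U_k \subseteq U$: the tail of $(a_n, b_n)$ still witnesses horseshoe with $U_k$ in place of $U$, and the $U_k$-components strictly refine the $U$-components. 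Orchestrating these Ramsey extractions and the shrinking so that (2)(i)--(iii), (3)(i)--(iii), (4)(ii) all hold simultaneously, while preserving $x_n, y_n \to x$ and the gains from the initial step, is the delicate bookkeeping I expect to dominate the proof; in particular the collapse case, where one must extract pairwise $U_k$-disconnected subsequences from within a single $U$-component, is the principal obstacle and will consume most of the technical work.
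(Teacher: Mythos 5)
Your opening moves match the paper's: fix a nested basis, Ramsey the relation ``same path component of $X$'' on the $x$-sequence, and dispose of case (4)(i) exactly as you describe. But in case (4)(ii) your proposal names the principal obstacle --- extracting a pairwise $U_k$-disconnected subsequence from points all lying in one path component of $X$ --- and then defers it as ``delicate bookkeeping'' without supplying the idea that actually overcomes it. The gap is real: a priori the Ramsey extraction on $\sim_{U_k}$ could return the monochromatic ``all connected'' outcome for \emph{every} $k$, and shrinking the neighborhood would then never help. The paper's resolution is a quantitative dichotomy via the function $h(z_0,z_1)=\sup\{n: z_0,z_1 \text{ are joined by a path in } U_n\}$. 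Either some infinite index set has $h$ bounded by a fixed $m$ on all distinct pairs --- which is exactly the separation you need, with $U:=U_m$ --- or an iterated Ramsey argument on the nested relations $R_m=\{(n,n'): h\geq m\}$ produces, by diagonal selection, a chain $x_{n_1}',x_{n_2}',\dots$ with $h(x_{n_k}',x_{n_{k+1}}')\to\infty$; concatenating connecting paths chosen in the shrinking $U_{h(x_{n_k}',x_{n_{k+1}}')}$ then yields an honest path from $x_{n_1}'$ to $x$ inside $V$. That second horn is not a dead end but is played off against the horseshoe hypothesis: if the $x$-sequence admits such a path to $x$, the $y$-sequence cannot (else one would connect $y_{n}'$ to $x$ to $x_{n}'$ inside $V$), so the bounded-$h$ argument applies to the $y$-sequence instead. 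Without this dichotomy your ``diagonal shrinking'' has no termination guarantee.

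The second idea you are missing makes the bookkeeping you fear unnecessary. Once one of the two original sequences, say $(x_n')_n$, has been refined so that distinct terms are pairwise non-connected in $U=U_m$ and none is connected to $x$ in $U$, the paper simply sets $x_n:=x_{2n}'$ and $y_n:=x_{2n+1}'$, drawing \emph{both} output sequences from that single input sequence. Condition (1) is then automatic (all terms lie in one path component of $X$ by the case hypothesis), conditions (2) and (3) all reduce to the one pairwise statement already secured, and (4)(ii) holds. This eliminates entirely the cross-relation $a_n\sim_U b_m$ that your plan would have to control separately. So while your case (4)(i) and your preliminary normalizations are correct, the proof of the hard case is not yet there.
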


\begin{proof}  By definition of horseshoe space there exists a neighborhood $V$ and sequences $(x_n')_n$ and $(y_n')_n$ in $V$ such that there exists a path from $x_n'$ to $y_n'$ in $X$ but no such path exists in $V$.  Let $U_0 = X \supseteq U_1 = V \supseteq U_2 \supseteq U_3 \supseteq \cdots$ be a nesting neighborhood basis at $x$ consisting of open sets.  We start with condition (4).  Define an equivalence relation $R$ on $\mathbb{N}$ by letting $(m, n) \in R$ if and only if there is a path in $X$ from $x_m'$ to $x_n'$.  By Theorem \ref{Ramsey} there exists an infinite subset $T \subseteq \mathbb{N}$ such that exactly one of the following holds:

\begin{enumerate}[(i)]  \item For all $m, n\in T$ we have $(m, n) \in R$.

\item For distinct $m, n\in T$ we have $(m, n)\notin R$.

\end{enumerate}

Thus by passing to the subsequence given by $T$ we may without loss of generality assume that (4) holds for the pair of sequences $(x_n')_n$ and $(y_n')_n$.  If (4) (i) holds then for at most one $n$ we have $x_n$ connected to $x$ via a path in $X$.  Then by passing to a further subsequence we get all $x_n$ in different path components from $x$ in the space $X$.  Furthermore, conditions (1) - (3) obviously follow for $U = V$ and $(x_n) = (x_n')_n$ and $(y_n)_n = (y_n')_n$.

Now we are left with the case where (4) (ii) holds for the sequences $(x_n')_n$ and $(y_n')_n$, so all elements in the set $S = \{x_n'\}_{n\in \mathbb{N}} \cup \{y_n'\}_{n\in \mathbb{N}}$ are in the same path component in $X$.  Define $h: S \times S \rightarrow \mathbb{N} \cup \{\infty\}$ by letting 

\begin{center}$h(z_0, z_1) = \sup\{n\in \mathbb{N}: \exists$ a path in $U_n$ from $z_0$ to $z_1\}$
\end{center}

\noindent  Were there to exist a sequence $(z_n)_n$ in $S$ such that $\liminf h(z_n, z_{n+1}) = \infty$ we would have paths $\rho_n$ in $U_{h(z_n, z_{n+1})}$ from $z_n$ to $z_{n+1}$ (here we define $U_{\infty} = \{x\}$).  By letting $\rho:I \rightarrow X$ be given by

\[
\rho(t) = \left\{
\begin{array}{ll}
\rho_n(n(n+1)(t-1+\frac{1}{n}))
                                            & \text{if }t\in [1-\frac{1}{n+1}, 1-\frac{1}{n+2}), \\
x
                                            & \text{if }t = 1.
\end{array}
\right.
\]

\noindent  we get a path $\rho$ from $z_0$ to $x$ within the open set $U_{\min \{h(z_n, z_{n+1})\}}$ which passes through all the points in $\{z_n\}_{n\in \mathbb{N}}$.  We treat further cases.

Suppose there is no subsequence $n_k$ in $\mathbb{N}$ for which $\liminf h(x_{n_k}', x_{n_{k+1}}') = \infty$.  We claim there exists an infinite set $T \subseteq \mathbb{N}$ and an $m\in \mathbb{N}$ such that $h(x_n', x_{n'}') \leq m$ for distinct $n, n'\in T$.  Supposing not, we define a symmetric relation $R_m = \{(n, n') \in \mathbb{N} \times \mathbb{N}: h(x_n', x_{n'}') \geq m\}$ and notice that $\mathbb{N} \times \mathbb{N}  = R_0 \supseteq R_1 \supset R_2 \supseteq \cdots$.  Theorem \ref{Ramsey} gives an infinite set $T_1 \subseteq \mathbb{N}$ such that one of the following occurs:

\begin{enumerate}[(a)]  \item $n, n'\in T_1 \Longrightarrow (n, n')\in R_1$

\item For distinct $n, n' \in T_1$ we have $(n, n')\notin R_1$
\end{enumerate}

\noindent and by our assumption we know it is (a) that holds.  As $T_1$ is infinite there exists by Theorem \ref{Ramsey} an infinite subset $T_2 \subseteq T_1$ such that $n, n'\in T_2 \Longrightarrow (n, n')\in R_2$ (again, by assumption).  Continuing in this way we get a sequence of infinite sets $\mathbb{N} \supseteq T_1\supseteq T_2\supseteq \cdots$  with $n, n'\in T_m$ implying $(n, n')\in R_m$.  But now we select $n_1\in T_1$, $n_2>n_1$ with $n_2\in T_2$, $n_3>n_2$ with $n_3 \in T_3$, etc. and obtain $\lim h(x_{n_k}', x_{n_{k+1}}') = \infty$, a contradiction.  Thus by passing to a subsequence and letting $U = U_m$ we may assume that $(x_n')_n$ is a sequence in $U$ such that there is no path in $U$ connecting $x_{n}'$ to $x_{n'}'$ for $n\neq n'$.  Moreover there is at most one $k$ for which $x_k'$ connects to $x$ by a path within $U$, so by passing to a further subsequence we may assume that none of the $x_n'$ connect to $x$ via a path in $U$.  We now let $x_n = x_{2n}'$ and $y_n = x_{2n+1}'$ and the conditions (1) - (4) are straightforward to check.  In case there is no subsequence $n_k$ in $\mathbb{N}$ for which $\liminf h(x_{n_k}', x_{n_{k+1}}') = \infty$ the same proof would yield $U$ and sequences $(x_n)_n$ and $(y_n)_n$ as desired.

Now suppose that for some subsequence $n_k$ in the naturals we have 

\begin{center}
$\liminf h(x_{n_k}',x_{n_{k+1}}') = \infty$.
\end{center}

\noindent Then by passing to a further subsequence we may assume that $h(x_n', x_{n+1}') \geq 1$ for all $n\in \mathbb{N}$ and that $\liminf  h(x_n', x_{n+1}') = \infty$.  Thus there exists a path in $U_1 = V$ from $x_0'$ to $x$ which passes through $x_1', x_2', x_3', \ldots$.  Now there cannot exist a subsequence $n_k$ in the naturals such that $\liminf h(y_{n_k}', y_{k_{k+1}}') = \infty$, for this would witness a path in $V$ which goes from some $y_{n_k}'$ to $x$ to $x_{n_k}'$, a contradiction.  Thus this puts us into an earlier case and we are done. 
\end{proof}

\begin{lemma}\label{bigcollapse}  For any pointed space $(Z, z)$ and $\epsilon\in (0,1]$  the map $f_{\epsilon}:  \Sigma(Z, z) \rightarrow \Sigma(Z, z)$ given by

\[
f_{\epsilon}((z_0, l))= \left\{
\begin{array}{ll}
(z_0, \frac{l}{\epsilon})
                                            & \text{if }\frac{l}{\epsilon} \in (-1, 1), \\
z
                                            & \text{otherwise}.
\end{array}
\right.
\]

\noindent is homotopic to the identity map.
\end{lemma}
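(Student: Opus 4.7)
The plan is to construct an explicit homotopy $H : \Sigma(Z, z) \times I \to \Sigma(Z, z)$ that linearly interpolates between the rescaling factor $\epsilon$ and the full length $1$. Setting $\epsilon_s := (1-s)\epsilon + s$, so that $\epsilon_0 = \epsilon$ and $\epsilon_1 = 1$, I would define
\[
H((z_0, l), s) := \begin{cases} (z_0, l/\epsilon_s) & \text{if } |l| \leq \epsilon_s, \\ z & \text{otherwise}, \end{cases}
\]
so that $H(\cdot, 0) = f_\epsilon$ and $H(\cdot, 1)$ is the identity on $\Sigma(Z,z)$.

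First I would verify that the formula descends to the suspension. For the points of $Z \times D_1$ collapsed to $z$ in $\Sigma(Z,z)$ (those in $Z \times \{-1, 1\}$ or in $\{z\} \times D_1$), both branches evaluate to $z$: when $l = \pm 1$ and $s < 1$ we have $|l| > \epsilon_s$ so the second branch applies, while at $s = 1$ the first branch outputs $(z_0, \pm 1)$, which is $z$ in the suspension; and $z_0 = z$ forces $(z_0, l/\epsilon_s) = z$ in the first branch. Hence $H$ is well-defined on $\Sigma(Z,z) \times I$.

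Next I would check continuity by lifting to $\tilde H : Z \times D_1 \times I \to \Sigma(Z,z)$. On the closed set $A := \{(z_0, l, s) : |l| \leq \epsilon_s\}$ the map is $(z_0, l, s) \mapsto (z_0, l/\epsilon_s)$, which is continuous because $\epsilon_s \geq \epsilon > 0$; on the closed set $B := \{(z_0, l, s) : |l| \geq \epsilon_s\}$ the map is constantly $z$. On the intersection $A \cap B$ we have $|l| = \epsilon_s$, hence $l/\epsilon_s = \pm 1$, and both formulas land on $z$. The pasting lemma then gives continuity of $\tilde H$, and since $I$ is locally compact Hausdorff the product quotient map $q \times \mathrm{id}_I : Z \times D_1 \times I \to \Sigma(Z,z) \times I$ is itself a quotient map, so $H$ is continuous.

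The main obstacle, such as it is, lies in checking that the transition locus $|l| = \epsilon_s$ is handled cleanly in the quotient; once the interpolation parameter $\epsilon_s$ is chosen so that the first branch pushes points to $(z_0, \pm 1)$ exactly where the second branch kicks in, the argument reduces to elementary pasting and bookkeeping with the suspension identifications.
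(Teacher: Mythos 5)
Your proposal is correct and is essentially the same homotopy as the paper's: the paper defines $H((z_0,l),t)=f_t((z_0,l))$ for $t\in[\epsilon,1]$, and your $\epsilon_s=(1-s)\epsilon+s$ is just the affine reparametrization of that interval to $[0,1]$. The only difference is that you spell out the well-definedness on the quotient and the pasting/quotient-map continuity check, which the paper leaves implicit.
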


\begin{proof}  The map $H: \Sigma(Z, z) \times [\epsilon, 1] \rightarrow \Sigma(Z, z)$ given by $H((z_0, l), t) = f_t(z_0, l)$ is a homotopy from $f_{\epsilon}$ to the identity map $f_1$.
\end{proof}

We note that $\pi_1(HA)$, the fundamental group of the harmonic archipelago, is isomorphic to the group $\topprod_n\mathbb{Z}/\langle\langle \{a_n\}_{n\in \mathbb{N}}\rangle\rangle$.  This follows by combining Theorem 5 and Proposition 13 of \cite{CHM}.

\begin{proposition}\label{onetotwo}  If $(X, x)$ is a first countable horseshoe space then the fundamental group of the harmonic archipelago embeds into $\pi_1(X, x)$.
\end{proposition}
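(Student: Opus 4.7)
The plan is to reproduce inside $\pi_1(\Sigma(X,x))$ the harmonic archipelago presentation $\pi_1(HA)\simeq \topprod_n\mathbb{Z}/\langle\langle\{a_n\}\rangle\rangle$ recorded just above the proposition; this matches the role of the proposition as the (1)$\Rightarrow$(2) step of Theorem~\ref{thmOmega}. First, apply Lemma~\ref{nicesequences} to extract a neighbourhood $U$ of $x$, sequences $(x_n)_n,(y_n)_n \subseteq U$ converging to $x$, and paths $\alpha_n\colon I \to X$ from $x_n$ to $y_n$ that must leave $U$.

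For each $n$ let $\delta_n$ be the meridian loop $t\mapsto(x_n,1-2t)$ at $x_n$, let $\epsilon_n$ be the analogous loop at $y_n$, and set $\eta_n := \delta_n \cdot \epsilon_n^{-1}$. The image of $\eta_n$ lies in $\Sigma(\{x,x_n,y_n\},x)$, so the $\eta_n$ have images shrinking to $x$; meanwhile the rectangle $(s,l)\mapsto(\alpha_n(s),l)$ from $I\times D_1$ into $\Sigma(X,x)$ supplies a nullhomotopy of $\eta_n$ whose own image necessarily escapes $U$ because $\alpha_n$ does. Because the $\eta_n$ shrink to $x$, the realisation construction of Section~\ref{Prelim} then yields a homomorphism $\Re\colon\topprod_n\mathbb{Z}\to\pi_1(\Sigma(X,x))$ with $\Re(a_n)=[\eta_n]=1$; hence $\langle\langle\{a_n\}\rangle\rangle\subseteq\ker\Re$ and $\Re$ descends to $\overline{\Re}\colon\pi_1(HA)\to\pi_1(\Sigma(X,x))$. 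Crucially, although each individual generator dies, infinite concatenations of the $\eta_n$ can remain nontrivial, and it is precisely this archipelago phenomenon that the rest of the argument must exploit.

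The main obstacle is injectivity of $\overline{\Re}$. The plan is contrapositive: given a representative $W$ of a nontrivial class in $\pi_1(HA)$, assume $\Re(W)$ admits a nullhomotopy $H\colon I^2\to\Sigma(X,x)$ and apply Construction~\ref{graph} to the Peano continuum $H(I^2)$ to build the locally finite graph $\Gamma_{H(I^2)}$. The horseshoe condition forbids any edge-chain in $\Gamma_{H(I^2)}$ from connecting a vertex meeting $x_n$ to one meeting $y_n$ while staying inside $U$, so every such chain is forced to route through vertices whose base points lie outside $U$. Together with the K\"onig-infinity observation from the remark following Construction~\ref{graph}, this caps how many pairs $(x_n,y_n)$ can be simultaneously collapsed by $H$. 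Pairing this geometric cap against the combinatorial fact, available via the retractions $p_N$ of the topologist product, that nontriviality of $[W]$ forces infinitely many essential occurrences of generators $a_n$ modulo the relations $a_n=1$ should produce the required contradiction. Turning this qualitative matching into a quantitative comparison between the mesh of $H$ and the forced excursions of the $\alpha_n$ outside $U$ is where the technical heart of the proof lies.
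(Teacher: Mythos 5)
Your first half reproduces the paper's setup exactly: the same meridian loops, the same realization homomorphism $\Re\colon\topprod_n\mathbb{Z}\to\pi_1(\Sigma(X,x))$ built from the pairs supplied by Lemma~\ref{nicesequences}, and the same observation that each $\eta_n$ is nullhomotopic via the rectangle over a connecting path, so that $\langle\langle\{a_n\}\rangle\rangle\subseteq\ker(\Re)$. But the proposition \emph{is} the reverse inclusion $\ker(\Re)\subseteq\langle\langle\{a_n\}\rangle\rangle$, and there your proposal stops at a strategy statement; you concede in your last sentence that the ``technical heart'' is not carried out. Moreover the strategy as sketched has a concrete defect: Construction~\ref{graph} only records the trace of $H(I^2)$ on $\check X$ inside the band $X\times(-\frac12,\frac12)$, whereas a component of $I^2\setminus H^{-1}(x)$ can join $\check{\iota}(x_n)$ to points over $X\setminus U$ by escaping through $X\times([-1,-\frac12)\cup(\frac12,1])$, which the graph does not see. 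Controlling precisely that escape is the content of the paper's Lemma~\ref{gap}, and it is proved not with the graph but by a direct metric argument on the Peano continuum $H(I^2)$ (a positive distance between two compact sets, first-exit points $z_m'$, an accumulation point, and local path connectedness contradicting clauses (2)--(3) of Lemma~\ref{nicesequences}).

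The endgame is also missing, and your proposed ``cap on simultaneously collapsed pairs'' versus ``infinitely many essential occurrences'' does not obviously produce the needed conclusion $[W]\in\langle\langle\{a_n\}\rangle\rangle$. What the paper does with the $N$ from Lemma~\ref{gap} is surgery: it replaces $f_{1/2}\circ H$ by the map $H'$ that retains only those components of $I^2\setminus(f_{1/2}\circ H)^{-1}(x)$ meeting preimages of $\check{\iota}(x_n),\check{\iota}(y_n)$ for $n>N$, obtaining a nullhomotopy of $\Re(p^N(W))$ lying entirely in $\Sigma(U,x)$; it then concludes $p^N([W])=1$ in $\topprod_n\mathbb{Z}$ by factoring through the injective endomorphism $a_n\mapsto a_{2n}a_{2n+1}^{-1}$ followed by a realization $\Re_\delta$ into $\pi_1(\Sigma(U,x))$ built from the individual meridians $\gamma_{x_m},\gamma_{y_m}$, whose injectivity is supplied by Lemma~\ref{sectioninclusion} (this is where Construction~\ref{graph} actually enters, indirectly). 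Since $\ker p^N$ is the normal closure of $a_0,\dots,a_N$, this places $[W]$ in $\langle\langle\{a_n\}\rangle\rangle$. None of these three ingredients --- the separation lemma, the surgery on the nullhomotopy, the doubled realization into $\Sigma(U,x)$ --- appears in your proposal, so the injectivity of $\overline{\Re}$, i.e.\ the statement itself, remains unproved.
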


\begin{proof}  Since $(X, x)$ is a horseshoe space we select an open neighborhood $U$ of $x$ and sequences $(x_n)_n$ and $(y_n)_n$ in $U$ satisfying the conclusion of Lemma \ref{nicesequences} .  Let $\gamma_{x_n}, \gamma_{y_n}: I \rightarrow \Sigma(X, x)$ be given by $\gamma_{x_n}(t) = (x_n, 2t-1)$ and similarly $\gamma_{y_n}(t) = (y_n, 2t-1)$, and $\gamma_n:I\rightarrow \Sigma(X, x)$ be given by $\gamma_{x_n}\ast \gamma_{y_n}^{-1}$.  Let $\Re: \topprod_n \mathbb{Z} \rightarrow \pi_1(\Sigma(X, x))$ be the realization map associated with the sequence $(\gamma_n)_n$ (see Section \ref{Prelim}).  We notice that each $\gamma_n$ is nulhomotopic in $\Sigma(X, x)$ since $\gamma_{x_n}$ is homotopic to $\gamma_{y_n}$ via the homotopy $H(t, s) = (\rho(s), 2t-1)$ where $\rho$ is a path from $x_n$ to $y_n$ in $X$.  Thus $a_n\in \ker(\Re)$ for each $n$ and $\langle\langle \{a_n\}_{n\in \mathbb{N}} \rangle\rangle \leq \ker(\Re)$.  We shall be done if we show $\langle\langle \{a_n\}_{n\in \mathbb{N}} \rangle\rangle \geq \ker(\Re)$.

Suppose that $[W]\in \ker(\Re)$ and fix a representative word $W \in [W]$ and order embedding $\iota:\overline{W} \rightarrow \mathcal{I}$.  By assumption there exists a nulhomotopy $H: I^2 \rightarrow \Sigma(X, x)$ of the loop $\gamma = \Re_{\iota}(W)$.  That is, for all $s, t \in I^2$ we have 

\begin{center}$H(t, 0) = \gamma(t)$

$H(0, s) = H(1, s) = H(t, 1) = x$

\end{center}

\begin{lemma}\label{gap}  There exists $N \in \mathbb{N}$ such that for any $n \geq N$, any path in $H(I^2)$ from $\check{\iota}(x_n)$ or $\check{\iota}(y_n)$ to $(X\setminus U) \times [-\frac{1}{2}, \frac{1}{2}]$ must travel through $X \times ([-1, -\frac{1}{2}) \cup (\frac{1}{2}, 1])$.
\end{lemma}

\begin{proof}  We know $H(I^2)$ is metrizable as the continuous Hausdorff image of a Peano continuum.  The sets $C_0 = ((X\setminus U)\times[-\frac{1}{2}, \frac{1}{2}]) \cap H(I^2)$ and $C_1= (\{x\}\cup \{\check{\iota}(x_n)\}_{n\in \mathbb{N}} \cup \{\check{\iota}(y_n)\}_{n\in \mathbb{N}})\cap H(I^2)$ are disjoint, closed and compact respectively in $H(I^2)$.  Thus there exists $d>0$ such that each element of $C_0$ is at least distance $d$ from any element of $C_1$.  Let $\{z_m\}_{m\in \mathbb{N}} \subseteq \{\check{\iota}(x_n)\}_{n\in \mathbb{N}} \cup \{\check{\iota}(y_n)\}_{n\in \mathbb{N}}$ be a set such that $m_0 \neq m_1 \Rightarrow z_{m_0}\neq z_{m_1}$ and such that for each $m\in\mathbb{N}$ there exists a path $\rho_m$ in $H(I^2)$ from $z_m$ fo $C_0$ which lies inside $(X \times [-\frac{1}{2}, \frac{1}{2}])\setminus \{x\}$.  Let $z_m'$ be the first point along $\rho_m$ which is distance $\frac{d}{2}$ away from $C_1$.  Then $z_{m_0}' \neq z_{m_1}'$ for $m_0 \neq m_1$ else we get a path in $U$ from $\chi(z_{m_0})$ to $\chi(z_{m_1})$ by passing along $\chi\circ \rho_{m_0}$ to $\chi(z_{m_0}')$ and then passing from $\chi(z_{m_0}')$ along $\chi\circ \rho_{m_1}$ back to $\chi(z_{m_1})$, which cannot exist by our choice of of sequences $(x_n)_n$ and $(y_n)_n$ as in Lemma \ref{nicesequences}.  Let $z\in H(I^2)$ be an accumulation point of the set $\{z_m'\}_{m\in \mathbb{N}}$.  Certainly the distance from $z$ to $C_1$ is $\frac{d}{2}$ and $z\in (U \times [-\frac{1}{2},  \frac{1}{2}]) \setminus \{x\}$.  By local path connectedness of $H(I^2)$ there exist distinct $m_0$ and $m_1$ and paths $\rho_0'$ and $\rho_1'$ in $(U\times (-\frac{3}{4}, \frac{3}{4}))\setminus \{x\}$ from $z$ to $z_{m_0}'$ and $z_{m_1}'$ respectively.  But now by composing these paths with $\chi$ we argue again that there is a path in $U$ from $\chi(z_{m_0})$ to $\chi(z_{m_1})$, a contradiction.
\end{proof}

Now using the $N$ specified by Lemma \ref{gap} we show that $p^N([W])$ is trivial in $\topprod_n\mathbb{Z}$.  The map $f_{\frac{1}{2}}\circ H$ is a nulhomotopy of $f_{\frac{1}{2}}\circ \gamma$.  Define $H'(t, s)$ to be $f_{\frac{1}{2}}\circ H(t, s)$ if for some $n >N$ there exists a path in $I^2 \setminus (f_{\frac{1}{2}}\circ H)^{-1}(x)$ from $(t, s)$ to an element of $I\times \{0\} \cap ((f_{\frac{1}{2}}\circ H)^{-1}(\{\check{\iota}(x_n)\}_{n>N} \cup \{\check{\iota}(y_n)\}_{n>N}))$ and let $H'(t, s) = x$ otherwise.  Note that $H'(t, 0) = f_{\frac{1}{2}} \circ \gamma'(t)$ where $\gamma' = \Re(p^N(W))$ and that $H'$ has range inside $\Sigma(U, x)$.  By the proof of Lemma \ref{bigcollapse} there is a homotopy of $\gamma'$ to $f_{\frac{1}{2}}\circ \gamma'$ which takes place entirely inside $\Sigma(U, x)$.  Thus $\gamma'$ is nulhomotopic in $\Sigma(U, x)$.

Let $\phi:\topprod_n \mathbb{Z} \rightarrow \topprod_n\mathbb{Z}$ be the endomorphism induced by the word map $g: a_n \mapsto a_{2n}a_{2n+1}^{-1}$.  This endomorphism is easily seen to be injective.  From the sequence of loops $(\delta_n)_n$ given by

\[
\delta_n = \left\{
\begin{array}{ll}
\gamma_{x_m}
                                            & \text{if }n = 2m, \\
\gamma_{y_m}
                                            & \text{if } n = 2m+1
\end{array}
\right.
\]

\noindent we get a realization map $\Re_{\delta}: \topprod_n\mathbb{Z} \rightarrow \pi_1(\Sigma(U, x))$.  By Lemma \ref{sectioninclusion} we have injectivity of the map $\Re_{\delta}$, and it is easily seen that $\Re_{\delta}\circ \phi(p^N( [W]))$ is homotopic in $\Sigma(U, x)$ to $\Re(p^N([W]))$.  As $\Re_{\delta} \circ \phi$ is injective we see that $p^N([W])$ is trivial in $\topprod_n\mathbb{Z}$.

\end{proof}

\end{section}

\section*{Acknowledgement}

The authors thank the referees for a careful reading of and suggestions for this paper, especially the incorrectness of the original proof of Proposition \ref{fivetoone} (b).  Also, deep gratitude to Jeremy Brazas is due for pointing out his paper with Gillespie \cite{BrazGill} which makes the proof of that proposition rather easy.

\bibliographystyle{amsplain}

\begin{thebibliography}{a}
\bibitem{BS} W. Bogley, A. Sieradski, \emph{Weighted combinatorial group theory and wild metric complexes}, Groups--Korea '98 (Pusan), de Gruyter, Berlin, 2000, 53-80.

\bibitem{BrazGill} J. Brazas, P. Gillespie, \emph{Fundamental groups of reduced suspensions are locally free}, arXiv 2211.10499.

\bibitem{CHM} G. Conner, W. Hojka, M. Meilstrup, \emph{Archipelago groups}, Proc. of the Amer. Math. Soc. 143 (2015), 4973-4988.

\bibitem{E1} K. Eda,  \emph{Free $\sigma$-products and noncommutatively slender groups}, J. Algebra 148 (1992), 243-263.

\bibitem{E2} K. Eda, \emph{Atomic property of the fundamental groups of the Hawaiian earring and wild locally path-connected spaces}, J. Math. Soc. Japan 63 (2011), 769-787.

\bibitem{F} H. Freudenthal, \emph{\"Uber die Klassen der Sph\"arenabbildung I. Gro{\ss}e Dimensionen}, Compos. Math. 5 (1938), 299-314.

\bibitem{Hi} G. Higman,  \emph{Unrestricted free products and varieties of topological groups},  J. London Math. Soc. 27 (1952), 73-81.

\bibitem{Ho} W. Hojka, \emph{The harmonic archipelago as a universal locally free group}, J. Algebra 437 (2015), 44-51.

\bibitem{J} T. Jech, Set Theory: The third millenium edition, revised and expanded, Springer Monographs in Mathematics, Springer-Verlag, Berlin (2003).

\end{thebibliography}

\end{document}